\numberwithin{equation}{section}
\begin{document}
\title[The set of limits of integral sums of a multifunction]{The set of limits of Riemann integral sums of a multifunction and Banach space geometry}

\author[D.~Slobodianiuk]{Denys Slobodianiuk}

\address{School of Mathematics and Computer Science \\
 V.~N.~Karazin Kharkiv National University \\
Svobody square~4 \\
61022~Kharkiv \\ Ukraine
}

\email{slobodyanyukden@gmail.com}

\maketitle
\newtheorem{theorem}{Theorem}[section]
\newtheorem{lemma}[theorem]{Lemma}
\newtheorem{definition}[theorem]{Definition}
\newtheorem*{remark}{Remark}
\newtheorem*{corollary}{Corollary}

\begin{abstract}
Let $X$ be a Banach space and $F: [0, 1] \rightarrow 2^{X} \setminus \{ \varnothing \}$ be a bounded multifunction. We study properties of the set $I(F)$ of limits in Hausdorff distance of Riemann integral sums of $F$. The main results are:\\
    $(1)$ $I(F)$ is convex in the case of finite-dimensional $X$; \\
    $(2)$ $I(F) = I(\operatorname{conv} F)$ in B-convex spaces or for compact-valued multifunctions; \\
    $(3)$ $I(F)$ consists of convex sets whenever $X$ is B-convex; \\
    $(4)$ $I(F)$ is star-shaped (thus non-empty) for compact-valued multifunctions in separable spaces.\\
    $(5)$ For each infinite-dimensional Banach space there is a bounded multifunction with empty $I(F)$.
\end{abstract}

\section{Introduction}
Riemann integral, being historically the first rigorously defined type of integral, is at the same time the easiest to generalize for the case of vector-valued functions. Specifically, let $X$ be a Banach space,  $f : [0, 1] \rightarrow X$ be a function, $(\Gamma, T)$, $\Gamma = \{\Delta_i\}_{i = 1}^{n}$, $T = \{\xi_i\}_{i = 1}^{n}$ with  $\xi_i \in [a_i, b_i] = \Delta_i$, be a tagged partition of $[0,1]$ into segments, and  $d(\Gamma) := \max_{i} |\Delta_i|$ be the \emph{diameter of the partition}. Exactly like for real-valued functions, one defines the \emph{Riemann integral sum}
\[ S(f, \Gamma, T) := \sum_{i = 1}^{n} (b_i - a_i) f(\xi_i) = \sum_{i = 1}^{n} |\Delta_i| f(t_i) \]
where $|\Delta_i|$ denotes the length of the segment. The function $f$ is called \emph{Riemann integrable} with 
\[\int_0^1 f(t) dt = x \in X\]
if $\lim\limits_{d(\Gamma) \to 0}  S(f, \Gamma, T) = x$.

When above limit does not exist, it makes sense to consider the set $I(f)$ of all limits of integral sums, i.e. the set of all elements of $X$ that can be represented as the limit of a sequence $\{S(f, \Gamma_k, T_k)\}_{k = 1}^{\infty}$ with $d(\Gamma_k) \xrightarrow[k \rightarrow \infty]{} 0$.

It is a standard exercise for students that for bounded real-valued functions $I(f)$ is a segment connecting the lower and upper Darboux integrals, so it is a convex set. The convexity of $I(f)$ for bounded functions with values in finite-dimensional spaces is a non-trivial fact. It was demonstrated by Hartman \cite{Hartman} in 1947. Shortly afterwards, in 1954, Halperin and Miller \cite{Halperin} extended the convexity result to infinite-dimensional Hilbert spaces. Later, Nakamura  and Amemiya \cite{NakamuraAmemiya} and independently Hartman \cite{Hartman2} demonstrated the convexity of $I(f)$ for bounded functions with values in some non-Hilbertian Banach spaces. In addition, Nakamura  and Amemiya \cite{NakamuraAmemiya} presented an example of a bounded function $f$ with values in the non-separable space $\ell_1[0, 1]$, for which  $I(f)$ is not convex. Such an example in separable $\ell_1$ was constructed independently by M.Kadets and V.Kadets in \cite{Kadets1}.The intriguing question, for which Banach spaces the convexity of   $I(f)$ is true, was attacked in \cite{Kadets2, Kadets3}, see Appendix in the monograph \cite{Kadets} for a detailed exposition of the theory. Nevertheless, the question of complete characterization of such spaces remains open, and even for the classical space $c_0$ it is unknown whether all $c_0$-valued bounded functions enjoy the  $I(f)$ convexity.

 The aim of this paper is to find out to what extend the known results about $I(f)$ generalize to multifunctions, that is to functions whose values are non-empty subsets of a Banach space. In the construction that we explain below, the set $I(F)$ of limits of Riemann integral sums of a multifunction $F$ will be a collection of subsets. So the question of convexity will arise in two different senses. The first question will be the convexity of each element of $I(F)$. The second question will be the convexity of the whole collection $I(F)$, that is the property that for each $A,B \in I(F)$ their convex combinations $\lambda A + (1-\lambda)B$ belong to $I(F)$, where sum of sets and multiplication of a set by a scalar are defined in the standard way: \\
 for subsets $A, B$ of a Banach space $X$ and $\lambda \in \mathbb{R}$
\[A + B = \{a + b: a \in A, b \in B\}  \; , \;\;\; \lambda A = \{\lambda a: a \in A\}.\]

Let $X$ be a Banach space and $F: [0,1] \rightarrow 2^{X} \setminus \{\varnothing\}$ be a multifunction. 
The Riemann integral sums of $F$ are
\[ S(F, \Gamma, T) = \sum_{i = 1}^{n} (b_i - a_i) F(\xi_i) = \sum_{i = 1}^{n} |\Delta_i| F(t_i). \]
Let $M(F) = \sup\{\| F(t) \|: t \in [0, 1] \}$, where $\| F(t) \| = \sup \{\| a\|: a \in F(t)\}$. \\
We call a multifunction $F$ \emph{bounded} if $M(F)$ is finite, \emph{compact-valued} if $F(t)$ is compact for all $t \in [0, 1]$, and \emph{convex-valued} if $F(t)$ is convex for all $t \in [0, 1]$.

\begin{definition}
    Hausdorff distance between non-empty sets in a metric space is
    \[d_H(A, B) = \max\{\sup \{d(a, B): a \in A\}, \sup \{d(b, A): b \in B\}\}.\]
\end{definition}
\begin{remark}
    Since $d_H(A, \overline{A}) = 0$, the limit in Hausdorff distance is not uniquely defined. In order to eliminate the ambiguity, we will
    allow only closed limit sets. This allows us to consider the set of non-empty closed bounded sets as a metric space.
\end{remark}

\begin{definition}
    For bounded multifunction $F : [0, 1] \rightarrow 2^X \setminus \{\varnothing\}$ we call the set of all closed non-empty subsets of X 
    that can be represented as the limit of a sequence $\{S(F, \Gamma_k, T_k)\}_{k=1}^{\infty}$ with  $d(\Gamma_k) \xrightarrow[k \rightarrow \infty]{} 0$ the set of limits of Riemann integral sums of the multifunction and denote it $I(F)$.
\end{definition}

\begin{remark}
    It is trivial that $I(F)$ is closed. In addition, if $F$ is bounded, it is bounded by $M(F)$.
\end{remark}

\begin{definition}
    We denote the convex hull of a set as $\operatorname{conv}$ and define a multifunction \\
    $\operatorname{conv} F : [0, 1] \rightarrow 2^X \setminus \{\varnothing\}$ as $(\operatorname{conv} F)(t) = \operatorname{conv}(F(t))$ for each $t \in [0, 1]$.
\end{definition}

\begin{remark}
     The Minkowski addition and $\operatorname{conv}$ commute  \cite[theorem 1.1.2]{Schneider}, that is $\operatorname{conv} \sum_{k=1}^n A_k = \sum_{k = 1}^n \operatorname{conv} A_k$.
     This allows us to use $S(\operatorname{conv} F, \Gamma, T)$ and $\operatorname{conv} S(F, \Gamma, T)$ interchangeably.
\end{remark}

We finish this section by providing proofs for a few rather trivial technical lemmas. The reader familiar with the Hausdorff distance can skip the remainder of this section.

\begin{lemma} \label{convex limit} 
    Let $\{A_k\}_{k=1}^{\infty}$ be a sequence of non-empty convex sets that converge to a closed set $A$. Then $A$ is also convex. 
\end{lemma}
\begin{proof} 
   We need to show that $\forall x^{1}, x^{2} \in A$ and $\lambda \in [0,1] \;\; x = \lambda x^{1} + (1 - \lambda) x^{2} \in A$. Since $d(x^{i}, A_k) \rightarrow 0$,
   there exist sequences $\{x_{k}^{i} \in A_k\}_{k = 1}^{\infty}$ converging to $x^{i}$ accordingly. Convexity of the sets guarantees that $x_k = \lambda x_{k}^{1} + (1 - \lambda) x_{k}^{2} \in A_k$.
   The fact that $d(x_k, A) \rightarrow 0$ implies $\lim_{k\rightarrow \infty} x_k = x \in \overline{A} = A$.
\end{proof}

\begin{lemma} \label{conv distance}
    An inequality $d_H(\operatorname{conv} A, \operatorname{conv} B) \leq d_H(A, B)$ holds for any non-empty sets $A, B$.
\end{lemma}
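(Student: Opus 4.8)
The plan is to exploit the standard reformulation of the Hausdorff distance in terms of neighbourhoods: if $U$ denotes the closed unit ball of $X$, then $d_H(A,B)$ equals the infimum of those $r \ge 0$ for which $A \subseteq B + rU$ and $B \subseteq A + rU$ hold simultaneously. Writing $r = d_H(A,B)$, I would first record that for every $\varepsilon > 0$ both inclusions $A \subseteq B + (r+\varepsilon)U$ and $B \subseteq A + (r+\varepsilon)U$ are valid (the $\varepsilon$ absorbs the possibility that the defining distances are not attained).

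The key step is then to apply $\operatorname{conv}$ to these inclusions. Taking the convex hull is monotone with respect to inclusion, and by the Remark above $\operatorname{conv}$ commutes with Minkowski addition, so I would compute
\[ \operatorname{conv} A \subseteq \operatorname{conv}\bigl(B + (r+\varepsilon)U\bigr) = \operatorname{conv} B + (r+\varepsilon)\operatorname{conv} U = \operatorname{conv} B + (r+\varepsilon)U, \]
using that the ball $U$ is already convex, so $\operatorname{conv} U = U$. The symmetric inclusion follows verbatim with the roles of $A$ and $B$ exchanged. Reading the two resulting inclusions back through the neighbourhood characterisation gives $d_H(\operatorname{conv} A, \operatorname{conv} B) \le r + \varepsilon$, and letting $\varepsilon \to 0$ completes the argument.

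Alternatively, to avoid the neighbourhood reformulation, a fully elementary variant works directly on points. Given $x = \sum_{i} \lambda_i a_i \in \operatorname{conv} A$ with $a_i \in A$, $\lambda_i \ge 0$ and $\sum_i \lambda_i = 1$, I would for each $i$ choose $b_i \in B$ with $\|a_i - b_i\| \le d(a_i, B) + \varepsilon \le d_H(A,B) + \varepsilon$, and set $y = \sum_i \lambda_i b_i \in \operatorname{conv} B$. The triangle inequality then yields $\|x - y\| \le \sum_i \lambda_i \|a_i - b_i\| \le d_H(A,B) + \varepsilon$, so that $d(x, \operatorname{conv} B) \le d_H(A,B) + \varepsilon$; taking the supremum over $x \in \operatorname{conv} A$, interchanging the roles of $A$ and $B$, and sending $\varepsilon \to 0$ again gives the claim.

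Because the statement is genuinely elementary, I do not expect a substantive obstacle; the only point requiring a line of care is the identity $\operatorname{conv}\bigl(B + (r+\varepsilon)U\bigr) = \operatorname{conv} B + (r+\varepsilon)U$ (equivalently, in the pointwise proof, the observation that the \emph{same} convex weights $\lambda_i$ may be reused for the approximating points $b_i$). This is precisely where the convexity of the ball and the commutation of $\operatorname{conv}$ with Minkowski addition enter, and it is what makes the passage to convex hulls non-expansive.
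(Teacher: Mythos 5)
Both of your arguments are correct, and your second (pointwise) variant coincides with the paper's own proof: the paper fixes $a = \sum_k \lambda_k a_k \in \operatorname{conv} A$, chooses $b_k^{\varepsilon} \in B$ with $\|a_k - b_k^{\varepsilon}\| \leq d(a_k, B) + \varepsilon$, forms $\sum_k \lambda_k b_k^{\varepsilon} \in \operatorname{conv} B$ with the same weights, and concludes by the triangle inequality and arbitrariness of $\varepsilon$ --- exactly your computation. Your primary argument, via the neighbourhood characterisation $d_H(A,B) = \inf\{r \geq 0 : A \subseteq B + rU \text{ and } B \subseteq A + rU\}$ with $U$ the closed unit ball, is a genuinely different and more structural route: it derives the non-expansiveness of $\operatorname{conv}$ from monotonicity of the convex hull, the commutation of $\operatorname{conv}$ with Minkowski addition (the Remark stated before the lemma, citing Schneider), and convexity of $U$, and your $\varepsilon$-slack correctly handles the possibility that the defining infima are not attained. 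What each approach buys: the neighbourhood version is shorter and makes the conceptual reason transparent (convex hulls cannot expand neighbourhoods because balls are convex), at the mild cost of invoking the linear structure of $X$ through Minkowski sums, whereas the paper's pointwise version is entirely elementary and needs nothing beyond the definition of $d_H$; both ultimately rest on the same key observation that the approximating points $b_k^{\varepsilon}$ can be recombined with the original convex weights.
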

\begin{proof}
    \[d_H(\operatorname{conv} A, \operatorname{conv} B) = \max\{\sup_{a \in \operatorname{conv} A} d(a, \operatorname{conv} B), \sup_{b \in \operatorname{conv} B} d(b, \operatorname{conv} A) \}. \]
    It is enough to demonstrate that 
    \begin{equation} \label{eq-1.1}
    \sup \{d(a, \operatorname{conv} B) : a \in \operatorname{conv} A\} \leq \sup\{ d(a, B) : a \in  A\},
    \end{equation}
    since the similar inequality for the second supremum can be obtained by swapping $A$ and $B$. \\
    For any $a \in \operatorname{conv} A$ consider its representation as a convex combination $a = \sum_{k = 1}^{n} \lambda_k a_k$, where $a_k \in A$.
    For each $a_k$ there is $b_k^{\varepsilon} \in B$ such that $\|a_k - b_k^{\varepsilon} \| \leq d(a_k, B) + \varepsilon \leq \sup\{ d(a, B) : a \in  A\} + \varepsilon$, thus
    \begin{eqnarray*}
    d(a, \operatorname{conv} B) &\leq& d( \sum_{k = 1}^{n} \lambda_k a_k,  \sum_{k = 1}^{n} \lambda_k b_k^{\varepsilon}) \leq \sum_{k = 1}^{n} \lambda_k \| a_k - b_k^{\varepsilon}\| \\
            &\leq& \sum_{k = 1}^{n} \lambda_k (\sup_{a \in A} d(a, B) + \varepsilon) = \sup_{a \in A} d(a, B) + \varepsilon,
    \end{eqnarray*}
    which, by arbitrariness of $\varepsilon$, gives us the desired inequality \eqref{eq-1.1}.
\end{proof}

\begin{lemma} \label{limit of convex}
    Let A be a convex set. Then for a sequence of non-empty sets $\{A_k\}_{k=1}^{\infty}$ convergence to A implies $\operatorname{conv} A_k \rightarrow A$.
\end{lemma}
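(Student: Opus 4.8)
The plan is to reduce the statement directly to Lemma~\ref{conv distance}, which I expect to carry essentially all of the weight. The one structural observation needed is that, since $A$ is convex, its convex hull coincides with itself, i.e. $\operatorname{conv} A = A$. The hypothesis $A_k \to A$ is by definition $d_H(A_k, A) \to 0$, and the goal $\operatorname{conv} A_k \to A$ is the assertion $d_H(\operatorname{conv} A_k, A) \to 0$, so everything is phrased in the single metric $d_H$.

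Chaining these observations together, I would simply write
\[ d_H(\operatorname{conv} A_k, A) = d_H(\operatorname{conv} A_k, \operatorname{conv} A) \leq d_H(A_k, A) \xrightarrow[k \to \infty]{} 0, \]
where the middle inequality is Lemma~\ref{conv distance} applied to the pair $A_k$, $A$, and the first equality uses $\operatorname{conv} A = A$. This closes the argument.

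Should one wish to avoid citing Lemma~\ref{conv distance}, the same bound can be obtained by unwinding the two suprema defining $d_H(\operatorname{conv} A_k, A)$ directly. For the supremum taken over $\operatorname{conv} A_k$, any $x = \sum_j \lambda_j x_j$ with $x_j \in A_k$ is approximated by $\sum_j \lambda_j a_j$ with each $a_j \in A$ chosen near $x_j$; here the convexity of $A$ is precisely what guarantees $\sum_j \lambda_j a_j \in A$, giving $d(x, A) \leq d_H(A_k, A)$. For the supremum over $A$, one uses the inclusion $A_k \subseteq \operatorname{conv} A_k$, so that $d(a, \operatorname{conv} A_k) \leq d(a, A_k) \leq d_H(A_k, A)$. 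Both suprema are thus bounded by $d_H(A_k, A)$.

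I do not anticipate a genuine obstacle: the only hypothesis that must actually be invoked is the convexity of $A$ (yielding $\operatorname{conv} A = A$), and the rest is an immediate consequence of the monotonicity of the Hausdorff distance under convex hulls established in Lemma~\ref{conv distance}. The only points demanding minor care are the direction of the inequality in that lemma and the consistent use of $d_H$ as the notion of convergence throughout.
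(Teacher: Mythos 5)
Your proposal is correct and follows essentially the same route as the paper's own proof: apply Lemma~\ref{conv distance} to the pairs $A_k$, $A$ and use $\operatorname{conv} A = A$ to conclude $d_H(\operatorname{conv} A_k, A) \leq d_H(A_k, A) \to 0$. The extra self-contained argument you sketch is a fine (and correct) fallback, but it is not needed.
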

\begin{proof}
    Convergence means that $d_H(A_k, A) \rightarrow 0$. The previous lemma applied to pairs $A_k, A$ gives that $d_H(\operatorname{conv} A_k, \operatorname{conv} A) \rightarrow 0$.
    It is left to notice that $\operatorname{conv} A = A$, therefore $\operatorname{conv} A_k \rightarrow A$.
\end{proof}

\begin{lemma} \label{limit of totally bounded}
    If sequence of non-empty and totally bounded sets $\{A_k\}_{k=1}^{\infty}$ converges to A, the limit is totally bounded,
\end{lemma}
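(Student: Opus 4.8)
The plan is to prove that the limit of a convergent sequence of totally bounded sets is totally bounded. Recall that a set $A$ is totally bounded if for every $\varepsilon > 0$ it admits a finite $\varepsilon$-net, that is, finitely many points whose $\varepsilon$-balls cover $A$. So I would fix an arbitrary $\varepsilon > 0$ and aim to construct a finite $\varepsilon$-net for the limit set $A$.

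First I would use the convergence $d_H(A_k, A) \to 0$ to select a single index $k$ with $d_H(A_k, A) < \varepsilon/3$. For this particular $A_k$, total boundedness supplies a finite $(\varepsilon/3)$-net $\{p_1, \dots, p_m\} \subseteq A_k$, so every point of $A_k$ lies within $\varepsilon/3$ of some $p_j$. My claim is then that this same finite collection of points serves (after an easy adjustment) as an $\varepsilon$-net for $A$. Given any $a \in A$, the bound $d_H(A_k, A) < \varepsilon/3$ forces $d(a, A_k) < \varepsilon/3$, so there is a point $q \in A_k$ with $\|a - q\| < \varepsilon/3$; and $q$ in turn lies within $\varepsilon/3$ of some net point $p_j$. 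The triangle inequality then yields $\|a - p_j\| < \varepsilon/3 + \varepsilon/3 = 2\varepsilon/3 < \varepsilon$, so the finite set $\{p_1, \dots, p_m\}$ is an $\varepsilon$-net for $A$. Since $\varepsilon$ was arbitrary, $A$ is totally bounded.

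I do not expect any genuine obstacle here; the argument is a routine two-step triangle-inequality chaining of an approximation of the limit set by a nearby $A_k$ with an approximation of $A_k$ by its own finite net. The only point requiring mild care is bookkeeping with the thirds (or equivalently choosing the net points from $A_k$ versus from $A$, which is harmless since total boundedness of $A$ does not require the net to be contained in $A$). This also parallels the structure of the preceding lemmas, where convergence in $d_H$ is repeatedly reduced to pointwise distance estimates, so the presentation can follow the same style.
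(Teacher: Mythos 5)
Your proof is correct and follows essentially the same argument as the paper: pick $A_k$ close to $A$ in Hausdorff distance, take a finite net of $A_k$, and chain the two approximations by the triangle inequality (the paper uses halves of $\varepsilon$ where you use thirds, an immaterial difference).
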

\begin{proof}
    We need to demonstrate existence of an $\varepsilon$-net for an arbitrary $\varepsilon$. Fix $A_k$ satisfying $d_H(A_k, A) < \frac{\varepsilon}{2}$ and let $N$ be an $\frac{\varepsilon}{2}$-net of $A_k$. We calim that $N$ is an $\varepsilon$-net of A. \\
    Indeed, for any element $a \in A$ there exists $a_k \in A_k$ such that $\|a - a_k \| \leq d_H(A, A_k)$ and for any element $a_k \in A_k$ there is an element $n \in N$ such that $\|a_k - n\| < \frac{\varepsilon}{2}$. \\
    We conclude that claim holds, since $\|a - n\| \leq \|a - a_k\| + \|a_k - n\| < \frac{\varepsilon}{2} + \frac{\varepsilon}{2} = \varepsilon$.
\end{proof}

\section{Connection between $I(F)$ and $I(\operatorname{conv}F)$}
It turns out that for some classes of multifunctions and/or Banach spaces $I(F) = I(\operatorname{conv} F)$,
which allows one to only consider convex-valued multifunctions in further investigations of the properties of $I(F)$.
In additions, later in this section we demonstrate that relation between these two sets is connected to the convexity of their elements. \\
The first two subsection deal with statements similar to known results for the Riemann integral of a multifunction  \cite{Shevchenko},
because in the context of integrable functions the condition $I(F) = I(\operatorname{conv} F)$ is just another way of saying that the integrals are equal.

\subsection{Bounded multifunctions}

\begin{definition}
    A Banach space $X$ is said to have infratype $p$ with constant $C$ if the inequality 
    \[ \min_{a_i = \pm 1}{\left\| \sum_{k = 1}^{n} a_k x_k\right\|} \leq C\left(\sum_{k = 1}^{n} \|x_k \|^{p}\right)^{\frac{1}{p}} \]
    holds for every finite collection of elements $\{x_k\}_{k=1}^n \subset X$.
    A Banach space $X$ is said to have nontrivial infratype if $X$ has infratype $p > 1$ with some $C$.  Spaces with non-trivial infratype are also called B-convex.
\end{definition}

One of the reasons why this concept is important for us it the following result.

\begin{theorem}[{\cite[page 121]{Kadets}}] \label{theorem 1} 
    Let $X$ be a Banach space with nontrivial infratype and $f: [0, 1] \rightarrow X$ be a bounded function. Then $I(f)$ is a convex set. 
\end{theorem}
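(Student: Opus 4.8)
The plan is to describe $I(f)$ through the sets of \emph{all} integral sums over a fixed partition and to realize it as an intersection of convex sets. For a partition $\Gamma=\{\Delta_i\}$ write
$\Sigma(\Gamma)=\{S(f,\Gamma,T):T\}=\sum_i|\Delta_i|\,f(\Delta_i)$, where $f(\Delta_i)=\{f(t):t\in\Delta_i\}$; by the Remark preceding this section $\operatorname{conv}\Sigma(\Gamma)=\sum_i|\Delta_i|\operatorname{conv} f(\Delta_i)$. A diagonal argument gives the elementary reformulation $I(f)=\bigcap_{\delta>0}\overline{U_\delta}$, where $U_\delta=\bigcup_{d(\Gamma)<\delta}\Sigma(\Gamma)$. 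The whole proof then reduces to the equality $I(f)=J$, where $J:=\bigcap_{\Gamma}\overline{\operatorname{conv}\Sigma(\Gamma)}$ and the intersection runs over all partitions: since $J$ is an intersection of convex sets it is automatically convex, whence $I(f)$ is convex. (The naive idea of realizing a midpoint by choosing, on each cell of a common partition, one of two competing tags via a sign selection works only when $x^1,x^2$ are attained on the \emph{same} partition; different partitions cannot be merged while preserving sums, and it is precisely this obstruction that forces the global $\Sigma$-set formulation.)

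For $I(f)\subseteq J$ the key observation is that refinement can only shrink the convexified sum set: if $\Gamma'$ refines $\Gamma$ then $\Sigma(\Gamma')\subseteq\operatorname{conv}\Sigma(\Gamma)$, since $f(\delta)\subseteq f(\Delta_i)$ for $\delta\subseteq\Delta_i$ and $\lambda A+\mu A\subseteq(\lambda+\mu)\operatorname{conv} A$. Fixing an arbitrary $\Gamma$ with $q$ interior nodes and taking $x=\lim_k S(f,\Gamma_k,T_k)\in I(f)$, I would pass to the common refinements $\Gamma\vee\Gamma_k$: a sum over $\Gamma_k$ agrees with a sum over $\Gamma\vee\Gamma_k$ except on the at most $q$ cells of $\Gamma_k$ split by nodes of $\Gamma$, so the two differ by at most $2qM(f)\,d(\Gamma_k)\to0$. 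With the refinement inclusion this yields $x\in\overline{\operatorname{conv}\Sigma(\Gamma)}$ for every $\Gamma$, i.e. $x\in J$. This half uses no geometry of $X$.

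The reverse inclusion $J\subseteq I(f)$ is where nontrivial infratype is indispensable, and it is the main obstacle. It rests on a Shapley--Folkman type estimate: in a space of infratype $p$ with constant $C$ one should have $d_H\big(\Sigma(\Gamma),\operatorname{conv}\Sigma(\Gamma)\big)\le C\big(\sum_i(|\Delta_i|M(f))^p\big)^{1/p}\le CM(f)\,d(\Gamma)^{(p-1)/p}$, which tends to $0$ as $d(\Gamma)\to0$ precisely because $p>1$. The engine is the sign selection built into the infratype inequality: a point of $\operatorname{conv}\Sigma(\Gamma)=\sum_i|\Delta_i|\operatorname{conv} f(\Delta_i)$ is a sum $\sum_i c_i$; in the model case $c_i=\tfrac12(a_i+b_i)$ with $a_i,b_i\in|\Delta_i|f(\Delta_i)$, writing $\tfrac12(a_i+b_i)+\tfrac12\varepsilon_i(a_i-b_i)$ and choosing $\varepsilon_i=\pm1$ with $\|\sum_i\varepsilon_i(a_i-b_i)\|\le C(\sum_i\|a_i-b_i\|^p)^{1/p}$ produces a genuine element of $\Sigma(\Gamma)$ within the stated distance. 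Granting this, every $x\in J$ lies in $\overline{\operatorname{conv}\Sigma(\Gamma)}$ for arbitrarily fine $\Gamma$, hence within $d_H(\Sigma(\Gamma),\operatorname{conv}\Sigma(\Gamma))+o(1)$ of $\Sigma(\Gamma)\subseteq U_\delta$, so $x\in\overline{U_\delta}$ for all $\delta$ and therefore $x\in I(f)$.

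The one technical point I expect to need real care is upgrading the sign argument from midpoints to arbitrary convex combinations $c_i=\sum_j\lambda_{ij}a_{ij}$: one must pick a single representative $a_{i,j(i)}\in|\Delta_i|f(\Delta_i)$ for each $i$ so that the aggregate error remains governed by $(\sum_i(|\Delta_i|M(f))^p)^{1/p}$. This is the genuine Shapley--Folkman-in-infratype statement, which I would establish either by a dyadic iteration of the two-point selection or by a derandomized probabilistic choice of representatives, both reducing to repeated use of the defining infratype inequality. Everything else—closedness of $I(f)$, the diagonal reformulation, and the refinement inclusion—is soft and independent of the geometry of $X$.
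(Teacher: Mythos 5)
The paper offers no proof of Theorem~\ref{theorem 1} to compare against: it is quoted as a known result from \cite[p.~121]{Kadets}. Judged on its own merits, your argument is correct in structure, and every step checks out: the diagonal reformulation $I(f)=\bigcap_{\delta>0}\overline{U_\delta}$, the convexity of $J=\bigcap_{\Gamma}\overline{\operatorname{conv}\Sigma(\Gamma)}$, the refinement inclusion $\Sigma(\Gamma')\subseteq\operatorname{conv}\Sigma(\Gamma)$ together with the $2qM(f)\,d(\Gamma_k)$ error estimate (which works precisely because the coarse partition $\Gamma$ is fixed, so only boundedly many cells get split), and the squeeze deducing $J\subseteq I(f)$ from the distance estimate. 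Working with the full tag-sets $\Sigma(\Gamma)$ is indeed a clean way to avoid the tag-relocation problem that afflicts a naive merging of two fine partitions. The one gap you flag --- rounding arbitrary convex combinations, not just midpoints, with aggregate error $C_1 M(f)\,d(\Gamma)^{(p-1)/p}$ --- is genuine but is exactly Lemma~\ref{Shevchenko} of this paper (quoted from \cite{Shevchenko}): apply it to the multifunction $G_\Gamma$ that on each cell $\Delta_i$ takes the constant value $f(\Delta_i)$; then $S(G_\Gamma,\Gamma,T)=\Sigma(\Gamma)$ for every choice of tags, $S(\operatorname{conv}G_\Gamma,\Gamma,T)=\operatorname{conv}\Sigma(\Gamma)$ by the Minkowski--$\operatorname{conv}$ commutation remark, and $M(G_\Gamma)\le M(f)$, so that lemma yields your estimate verbatim; its published proof is the dyadic iteration you sketch. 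Thus, modulo results already quoted in the paper, your proof is complete, and it runs on the same engine as the paper's Theorem~\ref{infratype equivalence}, where the distance between $S(F,\Gamma,T)$ and $S(\operatorname{conv}F,\Gamma,T)$ is squeezed to zero as $d(\Gamma)\to 0$; your contribution beyond that is the soft set-theoretic frame ($U_\delta$, $J$, refinement) that converts the estimate into convexity of $I(f)$.
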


\begin{remark}[{\cite[lemma 2.2.1]{Kadets}}]
    Every finite-dimensional space has nontrivial infratype.
\end{remark}

\begin{remark}
    The original definition of B-convexity says that $X$ is B-convex if $\ell_1$ is not finitely representable in $X$. The equivalence of B-convexity and nontriviality of infratype is a deep result from local theory of Banach spaces \cite{Pisier}.
\end{remark}

\begin{lemma}[{\cite[lemma 3.4]{Shevchenko}}] \label{Shevchenko}
    Let $X$ be a Banach space with nontrivial infratype $p$ with constant $C$ and $F: [0, 1] \rightarrow 2^{X} \setminus \{\varnothing\}$ be a bounded multifunction. 
    Then the following inequality holds for all tagged partitions: 
    \[d_H(S(F, \Gamma, T), S(\operatorname{conv} F, \Gamma, T)) \leq C_1 M(F) d(\Gamma)^{\frac{p-1}{p}},\]
    where $C_1 = \frac{2C}{2^{1-\frac{1}{p}}-1}$.
\end{lemma}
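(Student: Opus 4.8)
The plan is to read the statement as an approximate Shapley--Folkman--Starr estimate: a bound on how far the Minkowski sum $S(F,\Gamma,T)=\sum_i |\Delta_i| F(\xi_i)$ sits from its convex hull, where infratype $p$ is exactly the hypothesis that quantifies the ``convexifying'' effect of summing many sets. First I would reduce to one side of the Hausdorff distance. Since $F(\xi_i)\subseteq\operatorname{conv}F(\xi_i)$ and Minkowski addition is monotone, $S(F,\Gamma,T)\subseteq S(\operatorname{conv}F,\Gamma,T)$, so one of the two suprema defining $d_H$ is zero and it remains to bound $\sup\{d(b,S(F,\Gamma,T)):b\in S(\operatorname{conv}F,\Gamma,T)\}$. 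Fix such a $b$; by the Remark that $\operatorname{conv}$ commutes with Minkowski sums, $b=\sum_i t_i c_i$ with $t_i=|\Delta_i|\le \delta:=d(\Gamma)$, $\sum_i t_i=1$, and $c_i\in\operatorname{conv}F(\xi_i)$. The goal becomes: produce representatives $a_i\in F(\xi_i)$ with $\bigl\|\sum_i t_i a_i-\sum_i t_i c_i\bigr\|\le C_1 M(F)\delta^{\frac{p-1}{p}}$. I would record at the outset the two elementary facts $\sum_i t_i^{\,p}\le \delta^{p-1}\sum_i t_i=\delta^{p-1}$ and $\operatorname{diam}\operatorname{conv}F(\xi_i)=\operatorname{diam}F(\xi_i)\le 2M(F)$.

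The engine is a single application of the infratype inequality, which is precisely a device for choosing signs $\epsilon_\ell=\pm1$ so that a signed combination is small. Concretely I would prove the following one-step lemma: given a finite family of cells carrying weights $s_\ell\ge 0$, with values $c_\ell\in\operatorname{conv}B_\ell$ and $\|B_\ell\|\le M$, a choice of committed representatives $w_\ell\in\operatorname{conv}B_\ell$ with fewer atoms can be made so that
\[
\Bigl\|\sum_\ell s_\ell c_\ell-\sum_\ell s_\ell w_\ell\Bigr\|\le C\Bigl(\sum_\ell (2M s_\ell)^{p}\Bigr)^{\frac1p},
\]
the right-hand side being exactly the infratype bound applied to the sibling differences, using $\operatorname{diam}\operatorname{conv}B_\ell\le 2M$.

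I would then iterate this across dyadic scales to discretize fully. Approximating each $c_i$ by an average of $2^m$ points of $F(\xi_i)$ and committing this representation one bit at a time, at level $m$ one is choosing representatives at the weight scale $t_i 2^{-m}$ over $2^m$ sub-cells per interval. Applying the engine at level $m$, and using
\[
\sum_{\text{cells}}\bigl(t_i 2^{-m}\bigr)^{p}=2^{m(1-p)}\sum_i t_i^{\,p}\le 2^{m(1-p)}\delta^{p-1},
\]
bounds the level-$m$ correction by $2CM\,2^{-m\left(1-\frac1p\right)}\delta^{\frac{p-1}{p}}$. The geometric ratio is $q=2^{-\left(1-\frac1p\right)}$, and summing over $m\ge 1$ gives $2CM\delta^{\frac{p-1}{p}}\frac{q}{1-q}=\frac{2C}{2^{1-\frac1p}-1}M\delta^{\frac{p-1}{p}}=C_1M\delta^{\frac{p-1}{p}}$; a telescoping argument identifies the limit as $\sum_i t_i a_i$ with each $a_i\in F(\xi_i)$. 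Note that the factor $2$ in $C_1$ is exactly the diameter factor $2M$, and the denominator $2^{1-1/p}-1$ is exactly the sum of the geometric series.

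The main obstacle, and the step deserving the most care, is organizing the convex-combination ``bits'' into genuinely decoupled, geometrically finer weight scales, so that the per-level errors really do contract by the factor $q$ and the telescoped limit lands in the honest Minkowski sum $\sum_i t_i F(\xi_i)$ rather than merely in its convex hull. In particular one must verify that committing a representative on each sub-cell reduces complexity in a way compatible with the next level (the median-split representations must nest), and that $\sum_m\bigl(\sum_{\text{cells}}(t_i2^{-m}\operatorname{diam})^p\bigr)^{1/p}$ is controlled uniformly over partitions; everything else is routine once the scaling $\sum_i t_i^{\,p}\le\delta^{p-1}$ and the diameter bound are in place.
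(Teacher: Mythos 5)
The paper does not actually prove this lemma itself --- it is imported verbatim from \cite[Lemma 3.4]{Shevchenko} --- and your reconstruction is essentially that source's argument: reduce to the one-sided estimate via $S(F,\Gamma,T)\subseteq S(\operatorname{conv}F,\Gamma,T)$, write each $c_i\in\operatorname{conv}F(\xi_i)$ (up to a vanishing error) as a dyadic average of points of $F(\xi_i)$, and apply one infratype sign-choice per halving level, the per-level costs forming the geometric series with ratio $2^{-(1-\frac1p)}$ whose sum, together with the diameter factor $2M(F)$ and the weight estimate $\sum_i t_i^{\,p}\le d(\Gamma)^{p-1}$, yields exactly $C_1=\frac{2C}{2^{1-\frac1p}-1}$. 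The ``main obstacle'' you flag is in fact harmless: at each level you simply keep one point of every pair, which leaves an average of half as many points of the same set $F(\xi_i)$, so the levels nest automatically and the telescoped limit is an honest element $\sum_i t_i a_i$ of $S(F,\Gamma,T)$ with $a_i\in F(\xi_i)$.
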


\begin{theorem} \label{infratype equivalence}
    Let $X$ be a Banach space with nontrivial infratype $p$ and $F: [0, 1] \rightarrow 2^{X} \setminus \{\varnothing\}$ be a bounded multifunction. 
    Then $I(F) = I(\operatorname{conv} F)$.
\end{theorem}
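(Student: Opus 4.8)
The plan is to deduce both inclusions $I(F) \subseteq I(\operatorname{conv} F)$ and $I(\operatorname{conv} F) \subseteq I(F)$ from a single observation: along any sequence of tagged partitions whose diameters shrink to zero, the integral sums of $F$ and of $\operatorname{conv} F$ become arbitrarily close in Hausdorff distance, hence they are forced to share the same limit set whenever one exists. All of the analytic content lives in Lemma \ref{Shevchenko}, so the proof of the theorem itself is essentially a triangle-inequality argument.

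Concretely, suppose $A \in I(F)$, so there is a sequence of tagged partitions $\{(\Gamma_k, T_k)\}$ with $d(\Gamma_k) \to 0$ and $S(F, \Gamma_k, T_k) \to A$. Applying Lemma \ref{Shevchenko} to each $(\Gamma_k, T_k)$ gives
\[
d_H\bigl(S(F, \Gamma_k, T_k),\, S(\operatorname{conv} F, \Gamma_k, T_k)\bigr) \leq C_1 M(F)\, d(\Gamma_k)^{\frac{p-1}{p}}.
\]
Since $X$ has \emph{nontrivial} infratype we have $p > 1$, so $\frac{p-1}{p} > 0$ and the right-hand side tends to $0$ as $k \to \infty$. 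Then the triangle inequality for $d_H$ yields
\[
d_H\bigl(S(\operatorname{conv} F, \Gamma_k, T_k),\, A\bigr) \leq d_H\bigl(S(\operatorname{conv} F, \Gamma_k, T_k),\, S(F, \Gamma_k, T_k)\bigr) + d_H\bigl(S(F, \Gamma_k, T_k),\, A\bigr) \to 0,
\]
so $S(\operatorname{conv} F, \Gamma_k, T_k) \to A$ along the same partitions, and $A$ is closed by hypothesis; hence $A \in I(\operatorname{conv} F)$.

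For the reverse inclusion I would run the identical computation starting from $A \in I(\operatorname{conv} F)$: the bound in Lemma \ref{Shevchenko} is symmetric in its two arguments, so the same estimate forces $S(F, \Gamma_k, T_k) \to A$ whenever $S(\operatorname{conv} F, \Gamma_k, T_k) \to A$, giving $A \in I(F)$. Combining the two inclusions yields $I(F) = I(\operatorname{conv} F)$.

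I do not expect a genuine obstacle here, since the quantitative work is packaged in Lemma \ref{Shevchenko}. The one point worth flagging explicitly is the role of the hypothesis: the conclusion rests entirely on the exponent $\frac{p-1}{p}$ being strictly positive, which is exactly what nontrivial infratype ($p>1$) provides; for $p = 1$ the estimate would not decay and the argument would collapse. (Implicitly one also uses that $\operatorname{conv} F$ is again bounded with $M(\operatorname{conv} F) = M(F)$, which is immediate from $\|\operatorname{conv}(F(t))\| = \|F(t)\|$.)
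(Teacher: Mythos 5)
Your proposal is correct and follows essentially the same route as the paper's own proof: both inclusions are obtained by combining the quantitative bound of Lemma \ref{Shevchenko} with the triangle inequality for $d_H$ along the given sequence of tagged partitions. Your explicit remarks on why $p>1$ is needed and on $M(\operatorname{conv} F)=M(F)$ are correct details that the paper leaves implicit.
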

\begin{proof}
    For an arbitrary $A \in I(\operatorname{conv} F)$ there exists a sequence of tagged partitions such that $d_H(S(\operatorname{conv} F, \Gamma_k, T_k), A) \rightarrow 0$ with $d(\Gamma_k) \xrightarrow[k \rightarrow \infty]{} 0$.
    \[d_H(S(F, \Gamma_k, T_k), A) \leq d_H(S(F, \Gamma_k, T_k), S(\operatorname{conv} F, \Gamma_k, T_k)) + d_H(S(\operatorname{conv} F, \Gamma_k, T_k), A)\] 
    Lemma \ref{Shevchenko} gives that $C_1 M(F) d(\Gamma_k)^{\frac{p-1}{p}} + d_H(S(\operatorname{conv} F, \Gamma_k, T_k), A)$
    is an infinitesimal squeezing upper bound for the sequence $\{d_H(S(F, \Gamma_k, T_k), A)\}_{k=1}^{\infty}$. Hence, $A \in I(F)$, i.e, $I(\operatorname{conv} F) \subset I(F)$. \\
    The same approach shows that $I(F) \subset I(\operatorname{conv} F)$: \\
    For an arbitrary $A \in I(F)$ there exists a sequence of tagged partitions such that \\
    $d_H(S( F, \Gamma_k, T_k), A) \rightarrow 0$ with $d(\Gamma_k)\xrightarrow[k \rightarrow \infty]{} 0$.
    \[d_H(S(\operatorname{conv} F, \Gamma_k, T_k), A) \leq d_H(S(F, \Gamma_k, T_k), S(\operatorname{conv} F, \Gamma_k, T_k)) + d_H(S(F, \Gamma_k, T_k), A)\] 
    Lemma \ref{Shevchenko} gives that $C_1 M(F) d(\Gamma_k)^{\frac{p-1}{p}} + d_H(S(F, \Gamma_k, T_k), A)$
    is an infinitesimal squeezing upper bound for the sequence  $\{d_H(S(\operatorname{conv} F, \Gamma_k, T_k), A)\}_{k=1}^{\infty}$. Hence, $A \in I(F)$, i.e, $I(F) \subset I(\operatorname{conv} F)$. Thus, $I(F) = I(\operatorname{conv} F)$.
\end{proof}

\begin{remark}
    In the previous theorem, in addition to the fact that $I(F) = I(\operatorname{conv} F)$, we demonstrated that for any sequence of tagged partitions with $d(\Gamma_k) \xrightarrow[k \rightarrow \infty]{} 0$
    the limits of corresponding Riemann sums for $F$ and $\operatorname{conv} F$ coincide, provided that it exists for  at least one of the multifunctions.
\end{remark}

\begin{remark}
In fact, for a Banach space $X$ the following conditions are equivalent \\
(i) for every bounded multifunction $I(F) = I(\operatorname{conv} F)$; \\
(ii) $X$ has nontrivial infratype.
\end{remark}
It is left to explain why for every Banach space without nontrivial infratype there exists a bounded multifunction $F$ so that $I(F) \ne I(\operatorname{conv} F)$.
Considering earlier remark pertaining to the connection between the infratype and B-convexity, existence is explained in  \cite[subsection 3.2]{Shevchenko}:
authors construct an example of a multifunction $F(t) = A$ such that $A \in I(\operatorname{conv} F)$ and $ A \notin I(F)$.
However, they do not claim that $A \notin I(F)$ as they deal with the Riemann integral rather than with a set of limits $I(F)$, i.e.,
it was enough to show that there exists a sequence of tagged partitions with $d(\Gamma_k) \xrightarrow[k \rightarrow \infty]{} 0$ such that $S(F, \Gamma_k, T_k) \not \rightarrow A$.
In fact, their proof for a fixed sequence works for any sequence of tagged partitions word for word after one explains how to properly choose $n$ ( \cite[formula 3.3]{Shevchenko}) for each individual partition.
One can verify that $n_k$ satisfying $2^{n_k-1} \geq |T_k|$ suffice.

\subsection{Compact-valued bounded multifunctions}
Due to the fact that investigation of the $I(f)$ for bounded function is {equivalent} to the investigation of $I(F)$ for singleton-valued bounded multifuctions,
a natural way to generalize is to consider only compact-valued bounded multifunctions.

\begin{definition}
    A Banach space $X$ is said to have the approximation property, if for every compact subset $K$ and $\varepsilon$ there exists a finite-rank operator
    $P: X \rightarrow X$ such that $\|Px - x\| < \varepsilon$ holds for all $x \in K$.
\end{definition}

\begin{lemma} \label{Compact with approx}
    Let $X$ be a Banach space that has the approximation property and $F: [0, 1] \rightarrow 2^{X} \setminus \{\varnothing\}$ be a compact-valued bounded multifunction. Then $I(F) = I(\operatorname{conv} F)$.
\end{lemma}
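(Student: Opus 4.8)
The plan is to transfer the finite-dimensional instance of Theorem~\ref{infratype equivalence} to $X$ by means of the approximation property, proving the two inclusions $I(\operatorname{conv}F)\subseteq I(F)$ and $I(F)\subseteq I(\operatorname{conv}F)$ separately. The guiding observation is that for a single tagged partition $(\Gamma,T)$ with tags $t_1,\dots,t_n$ the set $K=\bigcup_{i=1}^n F(t_i)$ is a finite union of compacta, hence compact, so the approximation property supplies a finite-rank operator $P$ with $\|Px-x\|<\varepsilon$ on $K$. Its range $Y=PX$ is finite-dimensional and therefore has nontrivial infratype, so Lemma~\ref{Shevchenko} is available inside $Y$. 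Since $P$ is $\varepsilon$-close to the identity on every $F(t_i)$ we get $d_H(F(t_i),PF(t_i))\le\varepsilon$, and because $P$ is linear, $\operatorname{conv}PF(t_i)=P\operatorname{conv}F(t_i)$, so Lemma~\ref{conv distance} gives $d_H(\operatorname{conv}F(t_i),P\operatorname{conv}F(t_i))\le\varepsilon$ as well. Using the subadditivity of the Hausdorff metric under scaled Minkowski sums these estimates pass to the integral sums, yielding $d_H(S(F,\Gamma,T),S(PF,\Gamma,T))\le\varepsilon$ and $d_H(S(\operatorname{conv}F,\Gamma,T),S(\operatorname{conv}PF,\Gamma,T))\le\varepsilon$.

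For the inclusion $I(\operatorname{conv}F)\subseteq I(F)$ I would fix $A\in I(\operatorname{conv}F)$ with $S(\operatorname{conv}F,\Gamma_k,T_k)\to A$, and for each $k$ choose $P_k$ as above with $\varepsilon=1/k$, noting that its range dimension $m_k$ depends only on the finitely many tag values of $\Gamma_k$. The decisive point is that $\operatorname{conv}F$ is convex-valued, so a \emph{tag-preserving} refinement of $\Gamma_k$ — subdividing the intervals while assigning each sub-interval its parent tag — leaves $S(\operatorname{conv}F,\cdot)$ unchanged while it may be taken arbitrarily fine. Applying Lemma~\ref{Shevchenko} inside $Y_k$ gives a bound $C_1(Y_k)M(F)d(\Gamma)^{(p-1)/p}$ with $p>1$, and since $m_k$, and hence the constant $C_1(Y_k)$, is frozen, refining $\Gamma_k$ to some $\Gamma_k'$ with $d(\Gamma_k')$ small enough drives this term below $1/k$. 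Chaining the four estimates through $F\to P_kF\to\operatorname{conv}P_kF\to\operatorname{conv}F$ yields $d_H(S(F,\Gamma_k',T_k'),S(\operatorname{conv}F,\Gamma_k,T_k))\le 3/k$; since the right-hand sets converge to $A$ while $d(\Gamma_k')\to0$, a diagonal choice shows $A\in I(F)$.

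The reverse inclusion $I(F)\subseteq I(\operatorname{conv}F)$ is equivalent to the assertion that every $A\in I(F)$ is convex. Indeed, if $S(F,\Gamma_k,T_k)\to A$ then, by Lemma~\ref{conv distance}, $S(\operatorname{conv}F,\Gamma_k,T_k)=\operatorname{conv}S(F,\Gamma_k,T_k)\to\overline{\operatorname{conv}}A$, so $\overline{\operatorname{conv}}A\in I(\operatorname{conv}F)$; conversely Lemma~\ref{convex limit} forces every member of $I(\operatorname{conv}F)$ to be convex. Hence $A\in I(\operatorname{conv}F)$ holds precisely when $A=\overline{\operatorname{conv}}A$. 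To establish this I would again pass to $Y_k=P_kX$ and try to show, via Lemma~\ref{Shevchenko} and the Shapley--Folkman mechanism it encodes, that $d_H(S(F,\Gamma_k,T_k),S(\operatorname{conv}F,\Gamma_k,T_k))\to0$ along the given sequence.

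The hard part lives in this last step. The error furnished by the finite-dimensional estimate is $C_1(Y_k)M(F)d(\Gamma_k)^{(p-1)/p}$, and the infratype constant $C_1(Y_k)$ (equivalently the range dimension $m_k$) may grow without bound as the mesh shrinks, because the union $\bigcup_i F(t_i^{(k)})$ samples more and more — and possibly higher and higher dimensional — values of $F$. In the easy inclusion this was harmless: one froze $m_k$ and then refined tag-preservingly to send $d(\Gamma_k')$ to zero. Here that device is unavailable, since a tag-preserving refinement changes the limit from $A$ to $\overline{\operatorname{conv}}A$, which is exactly the phenomenon we must rule out. The resolution has to exploit that the given sequence \emph{converges}: convergence is a Cauchy condition which, together with compact-valuedness and the approximation property, should prevent the convexification gap $d_H(S(F,\Gamma_k,T_k),\operatorname{conv}S(F,\Gamma_k,T_k))$ from persisting in the limit. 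Making this quantitative — for instance by choosing, for each $k$, an operator $P_k$ whose dimension-dependent error still tends to zero along the given sums, or by a diagonal argument over the convergent sequence — is the crux of the proof.
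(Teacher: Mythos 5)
Your proof of the inclusion $I(\operatorname{conv}F)\subseteq I(F)$ is correct, and it takes a genuinely different route from the paper: you apply the approximation property to the compact union of tag values and exploit the fact that tag-preserving refinement leaves Riemann sums of the convex-valued multifunction $\operatorname{conv}F$ unchanged, which lets you freeze the finite-dimensional range $Y_k$ and then refine until the dimension-dependent constant of Lemma~\ref{Shevchenko} is harmless. That device is sound. However, the reverse inclusion $I(F)\subseteq I(\operatorname{conv}F)$ --- which you correctly reduce to the convexity of each $A\in I(F)$, in the spirit of Theorem~\ref{theorem 4} --- is left unproved: you name the obstruction (the infratype constant of $Y_k$ may blow up as the mesh shrinks, and refinement is forbidden because it would replace $A$ by $\overline{\operatorname{conv}}A$) and then state that overcoming it ``is the crux of the proof'' without overcoming it. This is a genuine gap, and it is exactly the half of the lemma where the real work lies.

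The missing idea, which the paper uses and which handles both inclusions at once along the \emph{same} sequence of partitions, is to apply the approximation property not to the values of $F$ but to the limit set $A$ itself. Since $F$ is compact-valued, each sum $S_k(F)$ is totally bounded, so by Lemma~\ref{limit of totally bounded} the closed limit $A$ is totally bounded, hence compact; thus for fixed $\varepsilon$ there is a \emph{single} finite-rank operator $P$, independent of $k$, with $\|Px-x\|<\varepsilon$ on $A$. Set $Q=Id-P$. Continuity of $P$ and $Q$ pushes the hypothesis $S_k(F)\to A$ through: $S_k(P\circ F)\to P(A)$ and $S_k(Q\circ F)\to Q(A)$. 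In the fixed finite-dimensional space $\operatorname{Im}P$, Theorem~\ref{infratype equivalence} (together with the remark following it, that the limits for $P\circ F$ and $P\circ\operatorname{conv}F$ coincide along the same sequence of partitions) upgrades the first convergence to $S_k(P\circ\operatorname{conv}F)\to P(A)$. For the $Q$-part, $\|Q(A)\|<\varepsilon$ by the choice of $P$, so eventually $\|S_k(Q\circ F)\|<2\varepsilon$; since the norm of a set equals the norm of its convex hull and $\operatorname{conv}(Q\circ F)=Q\circ\operatorname{conv}F$ by linearity, the same bound holds for $S_k(Q\circ\operatorname{conv}F)$. Combining the $P$- and $Q$-components gives $d_H(S_k(\operatorname{conv}F),A)<4\varepsilon$ for all large $k$, hence $A\in I(\operatorname{conv}F)$. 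This construction never meets the dimension-blowup problem you ran into, because the finite-rank operator is tied to $\varepsilon$ and to the compact limit $A$ rather than to the ever finer partitions; it also shows where compact-valuedness truly enters --- through compactness of the limit $A$, not of the individual sets $F(t_i)$.
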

\begin{proof}
    It is enough to demonstrate inclusions $I(F) \subset I(\operatorname{conv} F)$ and $I(\operatorname{conv} F) \subset I(F)$. \\
    We use a similar approach for both inclusions. Specifically, we consider an arbitrary element (together with a sequence of tagged partitions)
    from the limit set of one function and demonstrate that it belongs to the limit set of the other function. More precisely, we show that it can be represented as the limit of Riemann sums of the same tagged partitions.
    Since the proofs of both inclusions admit similar approach, we label multifunctions $F$, $\operatorname{conv} F$ as $G$, $H$ in an arbitrary order.

    Fix $A \in I(G)$ and a corresponding sequence of tagged partitions with $d(\Gamma_k) \xrightarrow[k \rightarrow \infty]{} 0$.
    We start with the notion that $S(G, \Gamma_k, T_k) \rightarrow A$ and claim that $S(H, \Gamma_k, T_k) \rightarrow A$. We fix the sequence of tagged partitions and start denoting $S(*, \Gamma_k, T_k)$ as $S_k(*)$.

    Firstly, we notice that Lemma \ref{limit of totally bounded} implies that $A$ is a compact.

    We proceed by fixing an arbitrary $\varepsilon$, for which the approximation property guaranties existence of a finite-rank operator
    $P_A^{\varepsilon}$ such that $\|P_A^{\varepsilon}(x) - x\| < \varepsilon$ holds for all $x \in A$.
    For continuous operators $P_A^{\varepsilon}$ and $Q_A^{\varepsilon} = Id - P_A^{\varepsilon}$ original convergence implies that
    \[S_k(P_A^{\varepsilon} \circ G) \rightarrow P_A^{\varepsilon}(A) \text{ and } S_k(Q_A^{\varepsilon} \circ G) \rightarrow Q_A^{\varepsilon} (A). \]
    Since $P_A^{\varepsilon}$ is a finite-rank operator, $P_A^{\varepsilon} \circ F$ is a bounded multifunction in a finite-dimensional $\operatorname{Im} P_A^{\varepsilon}$.
    Therefore, Theorem \ref{infratype equivalence}, specifically the first remark, applies to $\operatorname{Im} P_A^{\varepsilon}$ and
    $P_A^{\varepsilon} \circ F : [0, 1] \rightarrow 2^{\operatorname{Im} P_A^{\varepsilon}} \setminus \{ \varnothing \}$,
    that is, $S_k(P_A^{\varepsilon} \circ F) \rightarrow P_A^{\varepsilon}(A) \iff
    S_k(\operatorname{conv} P_A^{\varepsilon} \circ F) = S_k(P_A^{\varepsilon} \circ \operatorname{conv} F) \rightarrow P_A^{\varepsilon}(A)$.
    Therefore, $S_k(P_A^{\varepsilon} \circ H) \rightarrow P_A^{\varepsilon}(A)$. We conclude that there exists $M_{\varepsilon}$ such that for all $k > M_{\varepsilon}$
    \[d_H(S_k(P_A^{\varepsilon} \circ H), P_A^{\varepsilon}(A)) < \varepsilon \text{ and } d_H(S_k(Q_A^{\varepsilon} \circ G), Q_A^{\varepsilon}(A)) < \varepsilon \]
    Now notice that $\|Q_A^{\varepsilon}(A)\| =  d_H(Q_A^{\varepsilon}(A), \{ 0 \})< \varepsilon$ by the choice of $P_A^{\varepsilon}$. Therefore, 
    \[d_H(S_k(Q_A^{\varepsilon} \circ F), \{ 0 \}) = d_H(S_k(\operatorname{conv} Q_A^{\varepsilon} \circ F), \{ 0 \}) = d_H(S_k(Q_A^{\varepsilon} \circ G), \{ 0 \}) \leq \] 
    \[ \leq d_H(S_k(Q_A^{\varepsilon} \circ G), Q_A^{\varepsilon}(A)) + d_H(Q_A^{\varepsilon}(A), \{ 0 \}) < 2 \varepsilon \]
    Next we combine estimations: there exists $M_\varepsilon$ such that for all $k > M_\varepsilon$ \\
    Case $G = F, H = \operatorname{conv} F$:
    \[d_H((S_k(H), A)) = d_H(S_k(\operatorname{conv} F), A) = d_H(S_k(\operatorname{conv} (P_A^{\varepsilon} + Q_A^{\varepsilon}) \circ F), \ (P_A^{\varepsilon} + Q_A^{\varepsilon})(A)) = \]
    \[= d_H(S_k(\operatorname{conv} P_A^{\varepsilon} \circ F) + S_k(\operatorname{conv} Q_A^{\varepsilon} \circ F), \ P_A^{\varepsilon}(A) + Q_A^{\varepsilon}(A)) \leq \] 
    \[ \leq d_H(S_k(\operatorname{conv} P_A^{\varepsilon} \circ F), \ P_A^{\varepsilon}(A)) + d_H(S_k(\operatorname{conv} Q_A^{\varepsilon} \circ F), \ Q_A^{\varepsilon}(A)) \leq \]
    \[ \leq d_H(S_k(P_A^{\varepsilon} \circ H), \ P_A^{\varepsilon}(A)) + d_H(\operatorname{conv} Q_A^{\varepsilon}(A) \circ F, \ \{ 0 \}) + d_H(\{ 0 \}, \ Q_A^{\varepsilon}(A)) < \varepsilon + 2 \varepsilon + \varepsilon \]
    Case $G = \operatorname{conv} F, H = F$:
    \[d_H((S_k(H), A)) = d_H(S_k(F), A) = d_H(S_k((P_A^{\varepsilon} + Q_A^{\varepsilon}) \circ F), \ (P_A^{\varepsilon} + Q_A^{\varepsilon})(A)) \leq \]
    \[ \leq d_H(S_k(P_A^{\varepsilon} \circ F), \ P_A^{\varepsilon}(A)) + d_H(S_k(Q_A^{\varepsilon} \circ F), \ Q_A^{\varepsilon}(A)) \leq \]
    \[ \leq d_H(S_k(P_A^{\varepsilon} \circ H), \ P_A^{\varepsilon}(A)) + d_H(S_k(Q_A^{\varepsilon} \circ F), \ \{ 0 \}) + d_H(\{ 0 \}, \ Q_A^{\varepsilon}(A)) < \varepsilon + 2 \varepsilon + \varepsilon \]
    
    Since we can get this conclusion regardless of the choice of $\varepsilon$, we proved the claim, i.e., $S_k(H) \rightarrow A$.
    Thus, we demonstrated that $I(G) \subset I(H)$, that is, $I(F) \subset I(\operatorname{conv} F)$ and $I(\operatorname{conv} F) \subset I(F)$. 
\end{proof}

\begin{lemma}[{\cite[proof of the theorem 4.3]{Shevchenko}}] \label{Isometric approx embedding} 
      Any Banach space can be isometrically embedded in a Banach space with an approximation property.
\end{lemma}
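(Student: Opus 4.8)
The plan is to place $X$ inside a space of continuous functions on a compact Hausdorff space, since $C(K)$-spaces are the classical source of the approximation property. First I would embed $X$ isometrically into $\ell_\infty(\Gamma)$ with $\Gamma = B_{X^*}$ the closed unit ball of the dual. The canonical evaluation map $T : X \to \ell_\infty(\Gamma)$ given by $(Tx)(x^*) = x^*(x)$ satisfies $\|Tx\|_\infty = \sup_{x^* \in B_{X^*}} |x^*(x)| = \|x\|$ by the Hahn--Banach theorem, so $T$ is an isometric embedding.

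Second, I would identify $\ell_\infty(\Gamma)$ with $C(\beta\Gamma)$, where $\beta\Gamma$ is the Stone--\v{C}ech compactification of the discrete set $\Gamma$; this identification is an isometric isomorphism of Banach spaces. Consequently it suffices to prove that $C(K)$ has the approximation property for an arbitrary compact Hausdorff space $K$, and then the isometric copy of $X$ lies inside a space with that property.

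For this final and decisive step, fix a compact set $\mathcal{K} \subset C(K)$ and $\varepsilon > 0$. By the Arzel\`a--Ascoli theorem the compactness of $\mathcal{K}$ forces its members to be equicontinuous, so every point $p \in K$ has an open neighborhood $V_p$ with $|g(k) - g(p)| < \varepsilon$ for all $k \in V_p$ and all $g \in \mathcal{K}$. Choosing a finite subcover $\{V_{k_i}\}_{i=1}^n$ (indexed by the centers $k_i$) and a continuous partition of unity $\{\phi_i\}_{i=1}^n$ subordinate to it, I would set $P(g) = \sum_{i=1}^n g(k_i)\phi_i$, a bounded finite-rank operator whose image lies in $\operatorname{span}\{\phi_1,\dots,\phi_n\}$. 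Using $\sum_i \phi_i \equiv 1$ one estimates, for $g \in \mathcal{K}$ and $k \in K$,
\[
|Pg(k) - g(k)| = \Bigl|\sum_{i=1}^n (g(k_i) - g(k))\phi_i(k)\Bigr| \le \sum_{i} |g(k_i) - g(k)|\,\phi_i(k) < \varepsilon,
\]
because $\phi_i(k) \ne 0$ only when $k \in V_{k_i}$, where $|g(k_i) - g(k)| < \varepsilon$. Hence $\|Pg - g\|_\infty < \varepsilon$ for every $g \in \mathcal{K}$, which is exactly the approximation property.

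The main obstacle is precisely this last step: a single operator $P$ must approximate all of $\mathcal{K}$ at once, and this uniformity is supplied by equicontinuity via Arzel\`a--Ascoli — without it a partition-of-unity operator would only handle one function at a time. The other ingredients, the Hahn--Banach isometry and the isometric identification $\ell_\infty(\Gamma) \cong C(\beta\Gamma)$, are standard and cause no real difficulty.
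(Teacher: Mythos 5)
Your proof is correct and follows essentially the same route as the source the paper cites for this lemma (the proof of Theorem 4.3 in the Kadets--Kulykov--Shevchenko paper): the canonical isometric embedding $x \mapsto (x^{*} \mapsto x^{*}(x))$ of $X$ into $\ell_\infty(B_{X^{*}})$, which is a space with the (metric) approximation property. The only difference is that you additionally supply a self-contained verification that the ambient space has the approximation property, via the isometric identification $\ell_\infty(\Gamma) \cong C(\beta\Gamma)$ and the Arzel\`a--Ascoli/partition-of-unity argument, a fact which the cited source simply invokes as classical; both that verification and the rest of your argument are sound.
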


\begin{theorem} \label{compact}
    Let $X$ be a Banach and $F: [0, 1] \rightarrow 2^{X} \setminus \{\varnothing\}$ be a compact-valued bounded multifunction. Then $I(\operatorname{conv} F) = I(F)$.
\end{theorem}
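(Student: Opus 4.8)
The plan is to reduce to Lemma \ref{Compact with approx} by passing to a space where the approximation property is available. By Lemma \ref{Isometric approx embedding} there is a linear isometric embedding $j : X \rightarrow Y$ into a Banach space $Y$ that enjoys the approximation property. First I would record the elementary fact that, being linear and norm-preserving, $j$ intertwines all the relevant operations: it preserves Hausdorff distance, $d_H(j(A), j(B)) = d_H(A, B)$ for all non-empty $A, B \subset X$ (since $j$ preserves distances between points); it commutes with Minkowski sums and dilations, hence with the formation of Riemann sums, so $j(S(F, \Gamma, T)) = S(j \circ F, \Gamma, T)$; and it commutes with the convex hull, $j(\operatorname{conv} A) = \operatorname{conv}(j(A))$, so that $\operatorname{conv}(j \circ F) = j \circ \operatorname{conv} F$ as multifunctions.

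Next I would apply Lemma \ref{Compact with approx} in $Y$. The multifunction $j \circ F$ is compact-valued (the isometric image of a compact set is compact) and bounded with $M(j \circ F) = M(F)$, so the lemma yields the identity $I(j \circ F) = I(\operatorname{conv}(j \circ F)) = I(j \circ \operatorname{conv} F)$ inside the collection of closed subsets of $Y$.

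The crux is to transport these identities back to $X$, i.e. to show $I(j \circ G) = \{j(A) : A \in I(G)\}$ for $G \in \{F, \operatorname{conv} F\}$. For the inclusion $\supseteq$, if $A \in I(G)$ is the limit of $S(G, \Gamma_k, T_k)$, then $S(j \circ G, \Gamma_k, T_k) = j(S(G, \Gamma_k, T_k)) \rightarrow j(A)$ because $j$ preserves $d_H$; since $A$ is closed and $j$ is an isometric embedding with closed image $j(X)$ (closed because $X$ is complete), the set $j(A)$ is closed, so $j(A) \in I(j \circ G)$. For $\subseteq$, the one point that needs care is that every Riemann sum $S(j \circ G, \Gamma_k, T_k)$ lies inside the closed subspace $j(X)$, so any Hausdorff limit $B$ of such sums is contained in $j(X)$: indeed each $b \in B$ satisfies $d(b, S(j \circ G, \Gamma_k, T_k)) \rightarrow 0$, producing points of $j(X)$ converging to $b$, and $j(X)$ is closed. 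Consequently $A := j^{-1}(B)$ is a well-defined closed subset of $X$, and applying the isometry $j^{-1}$ to $S(j \circ G, \Gamma_k, T_k) \rightarrow B$ gives $S(G, \Gamma_k, T_k) \rightarrow A$, so $A \in I(G)$ and $B = j(A)$.

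Combining these, $j(I(F)) = I(j \circ F) = I(j \circ \operatorname{conv} F) = j(I(\operatorname{conv} F))$, and since $j$ is injective we may cancel it to conclude $I(F) = I(\operatorname{conv} F)$. The only genuinely delicate step is the backward transport, where completeness of $X$ (equivalently, closedness of $j(X)$ in $Y$) guarantees that limits of the embedded Riemann sums cannot escape $j(X)$; everything else is the routine verification that a linear isometry commutes with the constructions defining $I(\cdot)$.
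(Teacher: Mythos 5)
Your proposal is correct and takes essentially the same route as the paper, whose entire proof of this theorem is the single line that it ``immediately follows'' from Lemma \ref{Isometric approx embedding} and Lemma \ref{Compact with approx}: embed $X$ isometrically into a space with the approximation property and apply the lemma there. Your verification of the transport step --- that a linear isometric embedding $j$ commutes with Riemann sums and convex hulls, preserves $d_H$, and that closedness of $j(X)$ (by completeness of $X$) forces Hausdorff limits of the embedded sums to stay in $j(X)$ --- is exactly the detail the paper leaves implicit, done correctly.
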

\begin{proof}
    Immediately follows from Lemma \ref{Isometric approx embedding} and Lemma \ref{Compact with approx}.
\end{proof}

\subsection{Convexity of limits}
The question about the convexity of an integral for multifunctions is investigated for various integrals. For instance, it is discussed in  \cite[Chapters 8.7]{Audin} for measurable set-valued maps.
The Riemann integral is known to be convex for integrable multifunctions  \cite[Theorem 2.4, Proposition 2.2]{Shevchenko}. It is reasonable to investigate whether this holds for elements of $I(F)$.\\
In the context of convexity of limits, it is crucial that we consider only closed limits.

\begin{lemma} \label{convex function}
    Let $X$ be a Banach space and $F : [0, 1] \rightarrow 2^X \setminus \{\varnothing\}$ be a convex-valued multifunction. Then all elements of $I(F)$ are convex.
\end{lemma}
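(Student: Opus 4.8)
The plan is to reduce the statement to Lemma~\ref{convex limit} by observing that every Riemann integral sum of a convex-valued multifunction is itself a convex set. First I would fix an arbitrary $A \in I(F)$. By the definition of $I(F)$ there is a sequence of tagged partitions $(\Gamma_k, T_k)$ with $d(\Gamma_k) \xrightarrow[k \to \infty]{} 0$ such that $S(F, \Gamma_k, T_k) \to A$ in Hausdorff distance, and $A$ is closed (this closedness is built into the definition of $I(F)$).

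Next I would examine the structure of a single Riemann sum
\[ S(F, \Gamma_k, T_k) = \sum_{i = 1}^{n_k} |\Delta_i| F(\xi_i). \]
Since $F$ is convex-valued, each set $F(\xi_i)$ is convex. Multiplying a convex set by the non-negative scalar $|\Delta_i|$ preserves convexity, and the Minkowski sum of finitely many convex sets is again convex. Hence each $S(F, \Gamma_k, T_k)$ is a non-empty convex set.

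Finally I would invoke Lemma~\ref{convex limit}: the sequence $\{S(F, \Gamma_k, T_k)\}_{k=1}^{\infty}$ consists of non-empty convex sets converging to the closed set $A$, whence $A$ is convex. Since $A \in I(F)$ was arbitrary, every element of $I(F)$ is convex.

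The argument has essentially no obstacle once the convexity of the individual Riemann sums is noticed; the only point requiring care is that the limit set $A$ be closed, which is exactly why the convention of admitting only closed limit sets (and thus the hypothesis of Lemma~\ref{convex limit}) is in force. Without that convention the conclusion could fail, since the Hausdorff limit of convex sets need only be convex after taking closure. This is the reason the introductory remark stresses that we consider only closed limits in the context of convexity.
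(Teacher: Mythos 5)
Your proof is correct and follows exactly the paper's own argument: observe that each Riemann sum $S(F, \Gamma_k, T_k)$ is convex as a Minkowski sum of non-negative scalar multiples of convex sets, then apply Lemma~\ref{convex limit} to the convergent sequence of these sums and the closed limit $A$. Your additional remark on why closedness of the limit is essential matches the paper's own emphasis and adds nothing that conflicts with it.
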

\begin{proof} 
    Consider any $A \in I(F)$. There exists a sequence of tagged partitions so that $S(F,\Gamma_k, T_k) \rightarrow A$ with $d(\Gamma_k) \xrightarrow[k \rightarrow \infty]{} 0$.
    We notice that $S(F,\Gamma_k, T_k)$ are convex as a sum of convex sets. \\
    Lemma \ref{convex limit} applied to the $\{S(F,\Gamma_k, T_k)\}_{k=1}^{\infty}$ guaranties convexity of $A$.
\end{proof}

\begin{theorem} \label{theorem 4}
    Let X be a Banach space and $F: [0, 1] \rightarrow 2^{X}\setminus{ \{ \varnothing \} }$ be a bounded multifunction. The following conditions are equivalent: \\
    (i) $I(F) \subset I(conv F)$;\\
    (ii) all elements of $I(F)$ are convex.
\end{theorem}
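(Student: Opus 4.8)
The plan is to derive both implications directly from the technical lemmas of the first section, exploiting the single structural identity $S(\operatorname{conv} F, \Gamma, T) = \operatorname{conv} S(F, \Gamma, T)$ (the commuting of $\operatorname{conv}$ with Minkowski addition recorded in the earlier remark) together with the fact that $\operatorname{conv} F$ is a convex-valued multifunction. I expect no genuine obstacle: the content of Section 1 was arranged precisely so that this equivalence becomes a short deduction. The only point requiring a moment's care is that the limit sets in question are closed, so that Hausdorff convergence pins down a unique member of $I(\operatorname{conv} F)$.

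For the implication (i) $\Rightarrow$ (ii), I would take an arbitrary $A \in I(F)$ and use the hypothesis $I(F) \subset I(\operatorname{conv} F)$ to place $A \in I(\operatorname{conv} F)$. Since $\operatorname{conv} F$ is convex-valued, Lemma \ref{convex function} asserts that every element of $I(\operatorname{conv} F)$ is convex; in particular $A$ is convex. As $A$ was an arbitrary element of $I(F)$, all elements of $I(F)$ are convex.

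For the converse (ii) $\Rightarrow$ (i), I would take $A \in I(F)$, which by hypothesis is convex, and fix a sequence of tagged partitions with $d(\Gamma_k) \to 0$ and $S(F, \Gamma_k, T_k) \to A$. Writing $A_k = S(F, \Gamma_k, T_k)$, the convexity of the limit $A$ lets me invoke Lemma \ref{limit of convex} to conclude $\operatorname{conv} A_k \to A$. By the commuting identity, $\operatorname{conv} A_k = S(\operatorname{conv} F, \Gamma_k, T_k)$, so the very same sequence of partitions yields $S(\operatorname{conv} F, \Gamma_k, T_k) \to A$. Because $A$ is closed and non-empty, it is therefore the closed Hausdorff limit of Riemann sums of $\operatorname{conv} F$, i.e. $A \in I(\operatorname{conv} F)$; since $A$ was arbitrary, this gives $I(F) \subset I(\operatorname{conv} F)$, completing the equivalence.
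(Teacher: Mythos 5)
Your proposal is correct and follows essentially the same route as the paper: (i)~$\Rightarrow$~(ii) via Lemma~\ref{convex function} applied to the convex-valued multifunction $\operatorname{conv} F$, and (ii)~$\Rightarrow$~(i) via Lemma~\ref{limit of convex} together with the identity $\operatorname{conv} S(F,\Gamma_k,T_k) = S(\operatorname{conv} F,\Gamma_k,T_k)$. No substantive difference from the paper's argument.
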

\begin{proof}
    $(i) \Rightarrow (ii)$. Because of Lemma \ref{convex function} we have that $I(\operatorname{conv} F)$ consists of convex elements. Therefore all elements of its subset $I(F)$ must be convex. \\
    $(ii) \Rightarrow (i)$. 
    For any $A \in I(F)$ there exists a sequence of tagged partitions, for which $A_k = S(F,\Gamma_k, T_k) \rightarrow A$ with $d(\Gamma_k) \rightarrow 0$ as $k \rightarrow 0$.
    If we apply Lemma \ref{limit of convex} to $\{A_k\}_{k=1}^{\infty}$ and convex $A$, we immediately get that
    $\operatorname{conv} A_k = S(\operatorname{conv} F, \Gamma_k, T_k) \rightarrow A$ with $d(\Gamma_k) \rightarrow 0$ as $k \rightarrow 0$, i.e., $A \in I(\operatorname{conv} F)$.
\end{proof}

In Section 2.1, we discussed an example satisfying $I(\operatorname{conv} F) \not\subset I(F)$.
The next natural question to ask is whether there exists a multifunction such that $I(F) \not\subset I(\operatorname{conv} F)$, or equivalently, such that $I(F)$ contains a non-convex element.
The following example, suggested by Vladimir Kadets and published here with his kind permission, provides us with an instance of such a bounded multifunction. \\
Consider Banach space $X = L_1[0, 1]$ and multifunction $F$ defined in a following way: 
\[F(x) =
    \begin{cases}
        p\; E[\frac{2n-2}{2p}, \frac{2n}{2p}] & \text{if } x = \frac{2n - 1}{2p} \text{ for } p \in \mathbb{P}, n \in \{1, .., p\}\\
        \{0\} &  \text{otherwise} 
    \end{cases}, \]
where $E[a, b] \subset L_1[0, 1]$ is the set of all characteristic functions of measurable subsets of $[a, b]$ and $\mathbb{P}$ stays for the set of all prime numbers. \\
The function is bounded, since 
$$
\left\|p E[\frac{2n-2}{2p}, \frac{2n}{2p}] \right\| = \sup_A \int_A p \;d\lambda = \sup_A p \lambda(A) \leq p \lambda([\frac{2n-2}{2p}, \frac{2n}{2p}]) = 1.
$$
We claim that $E[0,1] \in I(F)$, that is $I(F)$ contains a set that is not convex. Indeed, for a sequence of tagged partitions
$\Gamma_p = \{\Delta_i^p = [\frac{2i-2}{2p}, \frac{2i}{2p}]\}_{i=1}^{p}$, $T_p = \{\xi_i^p = \frac{2i - 1}{2p}\}_{i=1}^{p}$ for prime $p$:
\[S(F,\Gamma_p,T_p) = \sum_{i=1}^{p} \|\Delta_i^p\| \xi_i^p = \sum_{i=1}^{p} \frac{1}{p} p E[\frac{2i-2}{2p}, \frac{2i}{2p}] = \sum_{i=1}^{p} E[\frac{2i-2}{2p}, \frac{2i}{2p}] = E[0, 1] \]
Hence, $\{S(f, \Gamma_p, T_p)\}_{p \in \mathbb{P}}$ converges to a closed set $E[0, 1]$ with $d(\Gamma_p) \xrightarrow[p \rightarrow \infty]{} 0$.

\section{Convexity of $I(F)$}
\begin{definition}[{\cite[page 131]{Kadets}}]
    We say that Banach space $X$ belongs to the convex class ($X \in \operatorname{CONV}$) if for any bounded function $f : [0, 1] \rightarrow X$ the set $I(f)$ is convex.
\end{definition}

\begin{definition}
    We say that Banach space $X$ belongs to the multiconvex class ($X \in \operatorname{MULTICONV}$) if for any bounded multifunction
    $F : [0, 1] \rightarrow 2^{X} \setminus \{ \varnothing \}$ the set $I(F)$ is convex.
\end{definition}

We notice that $\operatorname{MULTICONV} \subset \operatorname{CONV}$.
It follows from the fact that for each function $f$ we can consider singleton-valued $F(t) = \{f(t)\}$, for which $I(F) = \{ \{a\} | a \in I(f) \}$.
This observation provides us with examples of spaces that do not belong to the multiconvex class. For instance, as was mentioned in the introduction, $\ell_1 \notin \operatorname{CONV}$ and therefore not in $\operatorname{MULTICONV}$.

In terms of new definitions, in this section we demonstrate that finite-dimensional spaces belong to the multiconvex class.

\begin{lemma} \label{totally bounded}
    Let $X$ be a Banach space and $f: [0, 1] \rightarrow X$ be a function with totally bounded image.  Then $I(f)$ is convex.
\end{lemma}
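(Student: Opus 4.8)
The plan is to approximate $f$ uniformly by functions whose ranges sit in finite-dimensional subspaces, invoke the already-known finite-dimensional convexity (Theorem \ref{theorem 1}), and transfer convexity back to $f$ by letting the approximation improve. First I would record two preliminary facts. Since $\sum_i |\Delta_i| = 1$, every Riemann sum $S(f, \Gamma, T)$ is a convex combination of points of $K := f([0,1])$, so $S(f,\Gamma,T) \in \operatorname{conv} K$. As $K$ is totally bounded, so is $\operatorname{conv} K$, and hence its closure is compact. Consequently every sequence of integral sums of $f$ has a convergent subsequence, which shows $I(f) \neq \varnothing$; a standard diagonal argument (choosing, for a given target $x = \lim_n w_n$ with $w_n \in I(f)$, a partition of diameter $<1/n$ whose sum is within $1/n$ of $w_n$) shows that $I(f)$ is closed.

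Next, fix $\varepsilon > 0$. Total boundedness of $K$ furnishes a finite $\varepsilon$-net $\{y_1, \dots, y_m\}$; set $Y = \operatorname{span}\{y_1, \dots, y_m\}$, a finite-dimensional subspace, and define $g_\varepsilon(t)$ to be a net point nearest to $f(t)$, so that $\sup_t \|f(t) - g_\varepsilon(t)\| \le \varepsilon$ while $g_\varepsilon$ has bounded image contained in $Y$. For any tagged partition this gives
\[ \|S(f,\Gamma,T) - S(g_\varepsilon, \Gamma, T)\| \le \sum_{i} |\Delta_i|\,\|f(\xi_i) - g_\varepsilon(\xi_i)\| \le \varepsilon. \]
Since $Y$ is finite-dimensional and hence has nontrivial infratype, Theorem \ref{theorem 1} applies to the bounded $Y$-valued function $g_\varepsilon$, so $I(g_\varepsilon)$ is convex; moreover the sums of $g_\varepsilon$, being bounded in finite dimension, are relatively compact.

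The crux of the argument — and where I expect the only real care to be needed — is to show that $I(f)$ and $I(g_\varepsilon)$ are $\varepsilon$-close in Hausdorff distance. Given $x \in I(f)$ with $S(f, \Gamma_k, T_k) \to x$ and $d(\Gamma_k) \to 0$, relative compactness lets me pass to a subsequence along which $S(g_\varepsilon, \Gamma_{k_j}, T_{k_j}) \to z$; then $z \in I(g_\varepsilon)$ and, by the uniform bound above, $\|x - z\| \le \varepsilon$. The reverse inclusion is symmetric, this time extracting a convergent subsequence of the sums of $f$ using the relative compactness established in the first paragraph. The subtle point is precisely that the transferred sums need not converge a priori, so both directions genuinely rely on the relative compactness supplied by the totally bounded image; without it the Hausdorff estimate would fail.

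Finally I would assemble convexity. Take $x^1, x^2 \in I(f)$, $\lambda \in [0,1]$, and put $x = \lambda x^1 + (1-\lambda)x^2$. For each $n$, applying the Hausdorff bound with $\varepsilon = 1/n$ produces $z^1_n, z^2_n \in I(g_{1/n})$ within $1/n$ of $x^1$ and $x^2$; convexity of $I(g_{1/n})$ makes $z_n := \lambda z^1_n + (1-\lambda) z^2_n$ an element of $I(g_{1/n})$, and the triangle inequality gives $\|z_n - x\| \le 1/n$. The reverse direction of the Hausdorff bound then furnishes $w_n \in I(f)$ with $\|w_n - z_n\| \le 1/n$, whence $\|w_n - x\| \le 2/n \to 0$. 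Since $I(f)$ is closed, $x \in I(f)$, which is exactly the convexity claimed.
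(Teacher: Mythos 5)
Your proposal is correct and follows essentially the same route as the paper: approximate $f$ by a function $g_\varepsilon$ taking values in the finite-dimensional linear span of a finite $\varepsilon$-net of $f([0,1])$, apply Theorem \ref{theorem 1} to that function, extract convergent subsequences by compactness, and let $\varepsilon \to 0$. The only organizational difference is that you package the transfer as a two-sided Hausdorff-type estimate between $I(f)$ and $I(g_\varepsilon)$ and then invoke closedness of $I(f)$, whereas the paper directly constructs, for each $\varepsilon_k$, a partition of small diameter whose sum is within $3\varepsilon_k$ of $\lambda x^1 + (1-\lambda)x^2$, which avoids needing closedness of $I(f)$ and the relative compactness of the $f$-sums altogether.
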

\begin{proof}
    It is enough to show that for any $x^1, x^2 \in I(f), \; \lambda \in [0,1]$ an element $\lambda x^1 + (1 - \lambda) x^2$ belongs to $I(f)$. \\
    Consider their corresponding tagged partitions $S(f, \Gamma_k^{i}, T_k^{i}) \rightarrow x^i$ with $d(\Gamma_k) \xrightarrow[k \rightarrow \infty]{} 0$. \\
    For an arbitrary fixed $\varepsilon > 0$ we introduce finite-dimensional space $$X_{\varepsilon} = \operatorname{Lin} \{\text{a finite } \varepsilon\text{-net of } f([0, 1]) \}$$
    and corresponding function $f_{\varepsilon} : [0, 1] \rightarrow X_{\varepsilon} \subset X$ defined in a way that $\| f_{\varepsilon}(t) - f(t) \| < \varepsilon$. \\ 
    The sequences $\{S(f_\varepsilon, \Gamma_k^{i}, T_k^{i})\}_{k=1}^{\infty} \subset (M(f) + \varepsilon) \overline{B_{X_\varepsilon}}$ contain convergent subsequences
    $S(f_{\varepsilon}, \Gamma_{n_k^i}^{i}, T_{n_k^i}^{i}) \rightarrow x_{\varepsilon}^{i}$ due to compactness of $(M(f) + \varepsilon) \overline{B_{X_\varepsilon}}$, i.e.,
    $x_{\varepsilon}^{1}, x_{\varepsilon}^{2} \in I(f_\varepsilon)$.
    Applied to $f_\varepsilon$ and finite-dimensional Banach space $X_\varepsilon$, Theorem \ref{theorem 1} guaranties convexity of $I(f_\varepsilon)$,
    more specifically, $\lambda x_\varepsilon^1 + (1 - \lambda) x_\varepsilon^2 \in I(f_\varepsilon)$.\\
    Hence there exists $S(f_{\varepsilon}, \Gamma_{\varepsilon}, T_{\varepsilon})$ with $d(\Gamma_\varepsilon) < \varepsilon$ satisfying
    \[ \| S(f_\varepsilon, \Gamma_\varepsilon, T_\varepsilon) - \left(\lambda x_\varepsilon^1 + (1 - \lambda) x_\varepsilon^2 \right) \| < \varepsilon\]
    Since $\| f_{\varepsilon}(t) - f(t) \| < \varepsilon$ and consequently $\|x_{\varepsilon}^{i} - x^{i} \| \leq \varepsilon$, we make following estimations
    \[ \left\| S(f_{\varepsilon}, \Gamma_\varepsilon, T_\varepsilon) - \left(\lambda x^1 + (1 - \lambda) x^2 \right) \right\| < 2\varepsilon \Rightarrow
    \left\| S(f, \Gamma_\varepsilon, T_\varepsilon) - \left(\lambda x^1 + (1 - \lambda) x^2 \right) \right\| < 3 \varepsilon\]
    Described construction for infinitesimal sequence $\{\varepsilon_k \}_{k=1}^{\infty}$ results in existence of a sequence of tagged partitions such that
    $S(f, \Gamma_{\varepsilon_k} T_{\varepsilon_k}) \rightarrow \lambda x^1 + (1 - \lambda) x^2$ with $d(\Gamma_{\varepsilon_k}) \xrightarrow[k \rightarrow \infty]{} 0$. \\
    Thus, $\lambda x^1 + (1 - \lambda) x^2 \in I(f).$
\end{proof}

In the rest of the paper we use a Radstrom embedding  \cite{Radstr}, namely a particular instance that maps convex closed bounded sets into points of $l_\infty(B_{X^*})$.
This transformation allows us to look at multifunction as a function in another space. \\
Let $bc(X)$ be a set of all closed convex bounded non-empty subsets of $X$. 

\begin{definition} \label{def-Randstr}
    For a Banach space $X$ we define embedding $\varphi : bc(X) \rightarrow l_{\infty}(B_{X^{*}})$
    \[\varphi(A)(x^{*}) = \sup x^{*}(A) = \sup_{a \in A} x^{*}(a)\]
\end{definition}

It has following properties  \cite[theorems II.18, II.19]{cas-val}: \\
\indent (i) $\varphi(A) + \varphi(B) = \varphi(A + B)$; \\
\indent (ii) $\varphi(\lambda A) = \lambda \varphi(A) \;\;\; \forall \lambda \geq 0$; \\
\indent (iii) $\| \varphi(A) - \varphi(B) \| = d_H(A, B)$. \\

\begin{lemma} \label{embedding lemma}
    Let $X$ be a Banach space, $F: [0, 1] \rightarrow 2^{X} \setminus \{ \varnothing \}$ be a convex-valued bounded multifunction. Then \\
    (a) $S(F, \Gamma_k, T_k) \rightarrow L \iff S(\varphi(F), \Gamma_k, T_k) \rightarrow \varphi(L)$ for any convex set $L$; \\
    (b) $S(F, \Gamma_k, T_k) \rightarrow L \Rightarrow S(\varphi(F), \Gamma_k, T_k) \rightarrow \varphi(L)$ for any set $L$.
\end{lemma}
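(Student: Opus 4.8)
The plan is to reduce both parts to a single algebraic identity, namely that the Radström embedding intertwines the formation of Riemann sums: for every tagged partition $(\Gamma_k, T_k)$,
\[ S(\varphi(F), \Gamma_k, T_k) = \varphi\bigl(S(F, \Gamma_k, T_k)\bigr). \]
Before establishing this I would first note that $\varphi \circ F$ is a genuine function from $[0,1]$ into $\ell_\infty(B_{X^*})$: since $F$ is convex-valued and bounded, each value $F(\xi_i)$ is a convex bounded set, so $\varphi(F(\xi_i))$ is meaningful (identifying a set with its closure, which is harmless because $\sup_{a \in A} x^*(a) = \sup_{a \in \overline{A}} x^*(a)$), and property (iii) compared against $\{0\}$ gives $\|\varphi(F(\xi_i))\| = d_H(F(\xi_i), \{0\}) = \|F(\xi_i)\| \le M(F)$, so $\varphi \circ F$ is bounded and its Riemann sums are legitimate.

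Next I would verify the intertwining identity by unwinding the Riemann sum and invoking properties (i) and (ii). Since each length $|\Delta_i|$ is nonnegative, property (ii) gives $|\Delta_i|\,\varphi(F(\xi_i)) = \varphi\bigl(|\Delta_i| F(\xi_i)\bigr)$, and property (i), applied to the finite sum, then gives $\sum_i \varphi\bigl(|\Delta_i| F(\xi_i)\bigr) = \varphi\bigl(\sum_i |\Delta_i| F(\xi_i)\bigr) = \varphi\bigl(S(F, \Gamma_k, T_k)\bigr)$, which is the claimed identity. With this in hand, part (a) is immediate: for any convex set $L$, property (iii) yields
\[ \bigl\| S(\varphi(F), \Gamma_k, T_k) - \varphi(L) \bigr\| = \bigl\| \varphi(S(F, \Gamma_k, T_k)) - \varphi(L) \bigr\| = d_H\bigl(S(F, \Gamma_k, T_k), L\bigr), \]
so the left-hand side tends to $0$ if and only if the right-hand side does, which is exactly the asserted equivalence.

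For part (b) I would observe that the weaker hypothesis is self-correcting: each Riemann sum $S(F, \Gamma_k, T_k)$ is convex, being a Minkowski combination of the convex sets $F(\xi_i)$, so if $S(F, \Gamma_k, T_k) \to L$ then Lemma \ref{convex limit} forces the limit $L$ to be convex. Thus $\varphi(L)$ is defined and the forward direction of part (a) applies verbatim, giving $S(\varphi(F), \Gamma_k, T_k) \to \varphi(L)$; the reverse implication cannot be expected for arbitrary $L$ precisely because $\varphi$ is blind to the distinction between a set and its closed convex hull, which is why (b) is stated in one direction only. The only genuine care needed throughout — a bookkeeping obstacle rather than a conceptual one — is that the sets $F(t)$ and the sums $S(F, \Gamma_k, T_k)$ need not be closed and so lie, strictly speaking, outside the domain $bc(X)$ on which $\varphi$ and property (iii) are stated; this is dispatched once and for all by passing to closures, using $\varphi(A) = \varphi(\overline{A})$ and $d_H(A, \overline{A}) = 0$, after which every identity above is literally valid.
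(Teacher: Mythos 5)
Your proposal is correct and follows essentially the same route as the paper's proof: the intertwining identity $\varphi(S(F,\Gamma_k,T_k)) = S(\varphi(F),\Gamma_k,T_k)$ via properties (i) and (ii), the isometry (iii) for part (a), and Lemma \ref{convex limit} to reduce (b) to (a). Your extra remarks on boundedness of $\varphi \circ F$ and on passing to closures are careful additions the paper leaves implicit, but they do not change the argument.
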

\begin{proof}
    For a convex-valued multifunctions any Riemann sum is a convex set, therefore (iii) implies that \[d_H(S(F, \Gamma_k, T_k), L) = \|\varphi(S(F, \Gamma_k, T_k)) - \varphi(L)\|\]
    for any convex set $L$. Next we notice that $\varphi(S(F, \Gamma_k, T_k)) = S(\varphi(F), \Gamma_k, T_k)$ because of (i) and (ii). This proves (a). The (b) immediately follows from (a) and the fact that, by Lemma \ref{convex limit}, the limit $L$ must be convex as a limit of convex sets $S(F, \Gamma_k, T_k)$.
\end{proof}

\begin{theorem}\label{totally bounded F}
Let $X$ be a Banach space with nontrivial infratype and $F : [0, 1] \rightarrow 2^X \setminus \{\varnothing\}$ be a multifunction with totally bounded image. Then $I(F)$ is convex. 
\end{theorem}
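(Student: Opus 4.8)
The plan is to transport the problem into the Banach space $l_\infty(B_{X^*})$ through the Radstrom embedding $\varphi$ and then invoke Lemma \ref{totally bounded}, which already delivers convexity of $I$ for totally-bounded-image functions with values in an \emph{arbitrary} Banach space. First I would reduce to the convex-valued case: since $X$ has nontrivial infratype, Theorem \ref{infratype equivalence} gives $I(F) = I(\operatorname{conv} F)$, so it suffices to prove that $I(\operatorname{conv} F)$ is convex. Write $G = \operatorname{conv} F$, a convex-valued bounded multifunction.

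Next I would manufacture a suitable function on the target space. Because $X$ is complete and $\bigcup_{t} F(t)$ is totally bounded, its closed convex hull $K$ is compact, and each $\overline{G(t)}$ is a closed convex subset of $K$. The hyperspace of closed subsets of a compact set is compact for $d_H$, so $\{\overline{G(t)} : t \in [0,1]\}$ is totally bounded in the Hausdorff metric. Setting $g = \varphi \circ G : [0,1] \to l_\infty(B_{X^*})$, property (iii) of $\varphi$ converts this into total boundedness of the image $g([0,1])$. Hence Lemma \ref{totally bounded} applies to $g$ and $I(g)$ is convex.

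The core step is to identify $\varphi(I(G))$ with $I(g)$. Properties (i) and (ii) give $S(g, \Gamma, T) = \varphi(S(G, \Gamma, T))$ for every tagged partition. For $\varphi(I(G)) \subset I(g)$: any $L \in I(G)$ is convex by Lemma \ref{convex function}, and closed and bounded, hence $L \in bc(X)$, so Lemma \ref{embedding lemma}(a) turns a witnessing sequence $S(G,\Gamma_k,T_k) \to L$ into $S(g,\Gamma_k,T_k) \to \varphi(L)$, placing $\varphi(L)$ in $I(g)$. For the reverse inclusion: given $y \in I(g)$ with $S(g,\Gamma_k,T_k) \to y$, the isometry (iii) shows $\{\overline{S(G,\Gamma_k,T_k)}\}$ is Cauchy in $(bc(X), d_H)$, which is complete because $X$ is; its limit $L$ lies in $bc(X)$, satisfies $\varphi(L) = y$ by continuity of $\varphi$, and belongs to $I(G)$ since $d(\Gamma_k) \to 0$. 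Thus $\varphi(I(G)) = I(g)$.

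Finally I would transport convexity back. The map $\varphi$ is injective, being an isometry by (iii), and affine on $bc(X)$: for $\lambda \in [0,1]$, properties (i) and (ii) yield $\varphi(\lambda A + (1-\lambda) B) = \lambda \varphi(A) + (1-\lambda)\varphi(B)$. Given $A, B \in I(G)$, convexity of $I(g)$ puts $\lambda \varphi(A) + (1-\lambda)\varphi(B) = \varphi(\lambda A + (1-\lambda) B)$ into $I(g) = \varphi(I(G))$, so injectivity forces $\lambda A + (1-\lambda)B \in I(G)$; here $\lambda A + (1-\lambda)B$ is genuinely closed, since compactness of $K$ together with Lemma \ref{limit of totally bounded} makes every element of $I(G)$ compact, and convex combinations of compact sets stay compact. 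This establishes convexity of $I(G) = I(F)$. I expect the only real obstacle to be the reverse inclusion $I(g) \subset \varphi(I(G))$: it is the sole point where a Hausdorff limit must be produced out of convergence in $l_\infty(B_{X^*})$, and the argument depends on $\varphi$ being an isometric embedding, on completeness of the hyperspace $(bc(X), d_H)$, and on passing to closures of the (not necessarily closed) Riemann sums before applying property (iii).
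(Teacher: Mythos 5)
Your proposal is correct, and its skeleton --- reduce to $\operatorname{conv} F$ via Theorem \ref{infratype equivalence}, push the multifunction through the Radstrom embedding $\varphi$, invoke Lemma \ref{totally bounded} in $l_\infty(B_{X^*})$, and pull the convex combination back --- is exactly the paper's. Where you genuinely differ is the pull-back mechanism. The paper never needs the inclusion $I(g) \subset \varphi(I(G))$: it takes the witnessing partitions for $\varphi(\lambda A_1 + (1-\lambda)A_2) \in I(g)$ and applies the equivalence of Lemma \ref{embedding lemma}(a) with the convex set $L = \lambda A_1 + (1-\lambda)A_2$, which converts that same sequence of Riemann sums directly back into Hausdorff convergence in $X$; no completeness of the hyperspace and no injectivity of $\varphi$ are required. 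You instead establish the full identity $\varphi(I(G)) = I(g)$, whose reverse inclusion rests on an extra ingredient --- completeness of $(bc(X), d_H)$ --- and you then need injectivity of $\varphi$ plus closedness of $\lambda A + (1-\lambda)B$ to descend. That costs more machinery, but it buys a stronger intermediate statement ($\varphi$ restricts to a bijection between the two limit sets, which the paper only exploits later, in the proof of Theorem \ref{star theorem}, and there under a compactness assumption), and your write-up is more careful on two points the paper glosses over: why the image of $g$ is totally bounded (your compact-convex-hull/hyperspace argument) and why the convex combination of two limits is again a closed set (compactness of elements of $I(G)$ via Lemma \ref{limit of totally bounded}). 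Both arguments are sound; the paper's is leaner, yours is more self-contained about the set-theoretic correspondence induced by $\varphi$.
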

\begin{proof}
    Banach space X has nontrivial infratype; therefore, by Theorem \ref{infratype equivalence}, $I(F) = I(\operatorname{conv} F)$.
    We proceed to prove the convexity of $I(F)$ by demonstrating that $I(\operatorname{conv} F)$, consisting of convex sets, is itself convex.
    That is, for any $A_1, A_2 \in I(\operatorname{conv} F)$ and $\lambda \in [0, 1]$, the set $\lambda A_1 + (1 - \lambda) A_2$ is in $I(\operatorname{conv} F)$. \\
    Consider a corresponding tagged partitions $S(\operatorname{conv} F, \Gamma_k^{i}, T_k^{i}) \rightarrow L_i$ with $d(\Gamma_k^{i}) \xrightarrow[k \rightarrow \infty]{} 0$. \\
    By Lemma \ref{embedding lemma}(b), we know that $S(\varphi(\operatorname{conv} F), \Gamma_k^{i}, T_k^{i}) \rightarrow \varphi(A_i)$, that is $\varphi(A_1), \varphi(A_2) \in I(\varphi(\operatorname{conv} F))$.
    The function $f = \varphi(\operatorname{conv} F) : [0, 1] \rightarrow l_{\infty}(B_{X^{*}})$ satisfies the conditions of Lemma \ref{totally bounded}.
    Indeed, the image $f([0, 1]) = \varphi (\operatorname{conv} F)([0, 1])$ is totally bounded because $F([0, 1])$ is totally bounded, and $\varphi \circ \operatorname{conv}$ does not increase the distance, i.e.,
    $| \varphi(\operatorname{conv} A) - \varphi(\operatorname{conv} B) | = d_H(\operatorname{conv} A, \operatorname{conv} B) {\leq} d_H(A, B)$. \\
    Now we can use the fact that $I(f)$ is convex, namely $\lambda \varphi(A_1) + (1 - \lambda) \varphi(A_2) = \varphi(\lambda A_1 + (1 - \lambda) A_2) \in I(f)$. Thus, there exists a sequence of tagged partitions satisfying
    $S(\varphi(\operatorname{conv} F), \Gamma_k, T_k) \rightarrow \varphi(\lambda A_1 + (1 - \lambda) A_2)$ with $d(\Gamma_k) \rightarrow 0$ as $k \rightarrow 0$.
    For a convex set $\lambda A_1 + (1 - \lambda) A_2$, Lemma \ref{embedding lemma}(a) gives us that the sequence of tagged partitions satisfies
    $S(\operatorname{conv} F, \Gamma_k, T_k) \rightarrow \lambda A_1 + (1 - \lambda) A_2$ with $d(\Gamma_k) \rightarrow 0$ as $k \rightarrow 0$.
    Hence, we can conclude that $\lambda A_1 + (1 - \lambda) A_2 \in I(\operatorname{conv} F)$.
\end{proof}

\begin{theorem}\label{theorem 7}
    Let X be a finite-dimensional Banach space and $F: [0, 1] \rightarrow 2^{X} \setminus \{ \varnothing \}$ is bounded. Then $I(F)$ is convex.
\end{theorem}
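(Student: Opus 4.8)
The plan is to reduce this to Theorem~\ref{totally bounded F}, whose hypotheses are nontrivial infratype and a totally bounded image. Finite-dimensionality supplies the first hypothesis for free, via the remark that every finite-dimensional space has nontrivial infratype, so the real content is verifying the second: that the image $F([0,1]) = \{F(t) : t \in [0,1]\}$, viewed as a subset of the space of nonempty closed bounded sets under $d_H$, is totally bounded. Once both hypotheses are confirmed, Theorem~\ref{totally bounded F} delivers convexity of $I(F)$ at once.

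First I would dispose of a harmless technicality by passing to closures. Since $d_H(F(t), \overline{F(t)}) = 0$ and Minkowski sums of closures agree with the closures of Minkowski sums up to Hausdorff distance zero, the sums $S(F, \Gamma, T)$ and $S(\overline{F}, \Gamma, T)$ coincide in $d_H$; hence $I(F) = I(\overline{F})$, and $F([0,1])$ is totally bounded exactly when $\{\overline{F(t)} : t \in [0,1]\}$ is. So I may assume $F$ is closed-valued without loss of generality.

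The crux is then a compactness-of-the-hyperspace argument. Because $X$ is finite-dimensional and $F$ is bounded, the closed ball $K = M(F)\,\overline{B_X}$ is compact, and every value $\overline{F(t)}$ is a nonempty closed (hence compact) subset of $K$. The collection of all nonempty closed subsets of a compact metric space $K$ is itself totally bounded in the Hausdorff metric: given $\varepsilon > 0$, pick a finite $\varepsilon$-net $N \subset K$; then the finitely many nonempty subsets of $N$ form a finite $\varepsilon$-net, since for any closed $A \subseteq K$ the set $\{y \in N : d(y, A) \le \varepsilon\}$ is nonempty and lies within $d_H$-distance $\varepsilon$ of $A$. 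As $\{\overline{F(t)} : t \in [0,1]\}$ is contained in this totally bounded collection, it is totally bounded, which is precisely the missing hypothesis.

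With both hypotheses in hand, applying Theorem~\ref{totally bounded F} to $\overline{F}$ yields the convexity of $I(\overline{F}) = I(F)$. The only genuine obstacle is the total-boundedness step, but it is a soft topological fact rather than a delicate estimate. It is worth noting that finite-dimensionality is used in exactly two places: once to secure nontrivial infratype, and once to make the bounding ball $K$ compact so that its hyperspace is totally bounded. This is also why the statement cannot simply be absorbed into the infratype results alone, since in an infinite-dimensional space with nontrivial infratype a bounded multifunction need not have a totally bounded image.
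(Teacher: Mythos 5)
Your proof is correct, and its skeleton is identical to the paper's: both reduce Theorem~\ref{theorem 7} to Theorem~\ref{totally bounded F}, with finite-dimensionality supplying the nontrivial infratype. The difference is in how the remaining hypothesis, total boundedness of the image, is verified. The paper reads $F([0,1])$ as the union $\bigcup_{t} F(t) \subset X$, which lies inside the compact ball $M(F)\overline{B}_{X}$ and is therefore totally bounded with no work at all. You instead read the image as the collection $\{F(t) : t \in [0,1]\}$ sitting in the hyperspace of closed bounded sets with the Hausdorff metric, and consequently you must prove that the nonempty subsets of a compact set form a totally bounded family under $d_H$; your finite-net argument for this is correct. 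Notably, your reading is the one that the proof of Theorem~\ref{totally bounded F} actually relies on (the distance-nonincreasing map $\varphi \circ \operatorname{conv}$ is defined on the hyperspace with the metric $d_H$), so your extra lemma is precisely the bridge between the two readings of ``totally bounded image'' that the paper leaves implicit; in that sense your write-up closes a small expository gap rather than taking a longer route. Two minor remarks: the preliminary passage to closures is unnecessary, since your net argument works verbatim for arbitrary nonempty subsets of a totally bounded set (closedness of $A$ is never used, and total boundedness of $K$ suffices in place of compactness); and your closing observation --- that in an infinite-dimensional space with nontrivial infratype a bounded multifunction need not have totally bounded image, which is why Theorem~\ref{theorem 7} is not subsumed by the infratype results alone --- is accurate and worth keeping.
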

\begin{proof}
    We claim that conditions of Theorem \ref{totally bounded F} are met. Indeed, finite-dimensional spaces have nontrivial infratype,
    and the boundedness of the multifunction implies that $F([0, 1]) \subset M(F) \overline{B}_{X}$ is totally bounded in a finite-dimensional space.
\end{proof}

\section{Existence of a limit point}
It is important to mention that properties discussed in previous sections do not imply that $I(F)$ is not empty, because empty set is convex and consists of convex elements.

\begin{definition}
    Let $X$ be a Banach space. We call set $S \subset X$ star-shaped if there exists $x_0 \in S$ such that for all $ x \in S$ and $ \lambda \in [0, 1]$ element $\lambda x_0 + (1 - \lambda) x$ also belongs to $S$.
\end{definition}

\begin{remark}
    The definition  implies that a star-shaped set $S$ is not empty because $x_0 \in S$.
\end{remark}

\begin{definition}
    Let $M$ be a Banach space. We denote $K(M)$ the metric space of all convex compact non-empty subsets of $M$ endowed with the Hausdorff distance. 
\end{definition}

\begin{remark} \label{K(M)}
    The space $K(M)$ inherits separability and completeness of $M$  (see \cite[theorem II.14]{cas-val} and \cite[pages 259-260]{KM}).
 \end{remark}

\begin{theorem} \label{star theorem}
    Let $X$ be a separable Banach space, $F : [0, 1] \rightarrow 2^X \setminus \{\varnothing\}$ be a compact-valued bounded multifunction. Then $I(F)$ is star-shaped.
\end{theorem}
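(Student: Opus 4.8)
The plan is to reduce the statement to a bounded \emph{function} valued in a separable Banach space, to construct one distinguished ``localizable'' limit point, and to reach every other limit point from it by interleaving partitions. First I would convexify and embed. By Theorem~\ref{compact} one has $I(F)=I(\operatorname{conv}F)$, so it suffices to treat the convex-valued multifunction $G=\operatorname{conv}F$; every $A\in I(G)$ is convex by Lemma~\ref{convex function} and compact by Lemma~\ref{limit of totally bounded}, whence $I(F)=I(G)\subset K(X)$. Here separability of $X$ is essential: the dual ball $K:=(B_{X^{*}},w^{*})$ is a compact \emph{metrizable} space, the support function $\varphi(A)=\sup A(\cdot)$ of a compact convex set is weak$^{*}$-continuous, and therefore the Radström embedding of Definition~\ref{def-Randstr} maps $K(X)$ isometrically into the \emph{separable} Banach space $C(K)$, carrying Minkowski sums and nonnegative dilations to the linear operations (properties (i)--(iii)). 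Since $K(X)$ is complete (Remark~\ref{K(M)}) and $\varphi$ is an isometry, $\varphi(K(X))$ is closed in $C(K)$; as every Riemann sum satisfies $S(g,\Gamma_k,T_k)=\varphi(S(G,\Gamma_k,T_k))\in\varphi(K(X))$ for $g:=\varphi(G)$, Lemma~\ref{embedding lemma} identifies the limits and gives $I(g)=\varphi(I(G))$ as subsets of $C(K)$.

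Because $\varphi$ is a linear isometry on the cone of support functions and $\varphi(\lambda A_{1}+(1-\lambda)A_{2})=\lambda\varphi(A_{1})+(1-\lambda)\varphi(A_{2})$, star-shapedness of $I(F)=I(G)$ about some $A_{0}$ is \emph{equivalent} to star-shapedness of $I(g)$ about $\varphi(A_{0})$, where $g:[0,1]\to C(K)$ is a bounded function into a separable space. (Applied to a singleton $F(t)=\{f(t)\}$ this already contains the assertion that $I(f)$ is star-shaped for every bounded $f$ into a separable space.) Thus the whole theorem reduces to the self-contained statement: \emph{for a bounded $f:[0,1]\to Y$ with $Y$ separable, $I(f)$ is star-shaped}; the center, being an element of $I(f)$, is then automatically a genuine limit of Riemann sums and, back in $C(K)$, lies in $\varphi(K(X))=\varphi(I(G))$.

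For this reduced statement I would construct the center $x_{0}$ to be \emph{localizable}: not merely $x_{0}\in I(f)$, but for every finite family of disjoint subintervals $\{J_{r}\}$ of total length $\ell$ there exist arbitrarily fine tagged partitions of $\bigcup_{r}J_{r}$ whose Riemann sums converge to $\ell\,x_{0}$. The cleanest way to encode this is a finitely additive, weight-consistent interval mean $J\mapsto m(J)\in Y$ with $m(J)$ a normalized limit of Riemann sums over $J$ and $m(J\cup J')=m(J)+m(J')$ for adjacent intervals; then $x_{0}:=m([0,1])$, and additivity delivers both $x_{0}\in I(f)$ (dyadic refinement with mesh $\to0$, the sums telescoping to $m([0,1])$) and the localizability above. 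Granting such an $x_{0}$, the star property follows by interleaving: given $x\in I(f)$ with $S(f,\Gamma_k,T_k)\to x$, inside each interval $\Delta_i$ (tag $\xi_i$, length $\ell_i$) I retain a subinterval of length $(1-\lambda)\ell_i$ still tagged at $\xi_i$ and fill the remaining length $\lambda\ell_i$ by a fine center-realizing partition; the retained parts contribute $(1-\lambda)S(f,\Gamma_k,T_k)\to(1-\lambda)x$, the inserted parts occupy a family of total length $\lambda$ and by localizability contribute approximately $\lambda x_{0}$, so $S\to\lambda x_{0}+(1-\lambda)x$ and $\lambda x_{0}+(1-\lambda)x\in I(f)$.

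I expect the construction of the localizable center (equivalently, of the consistent interval mean $m$) to be the main obstacle. The tempting idea of forcing consecutive refinements to be Cauchy fails, since refining an interval unavoidably introduces new tag values $f(\eta_{j})$ far from the old $f(\xi)$, and in a space without nontrivial infratype these discrepancies do not wash out --- this is exactly the phenomenon behind the failure of convexity in Theorem~\ref{theorem 1} and the $\ell_{1}$ example. A compatible selection of local limits must therefore be built by hand, and this is precisely where separability together with completeness of $Y$ (that is, of $K(X)$, Remark~\ref{K(M)}) enters: one enumerates the dyadic intervals and chooses $m$ on them recursively by a diagonal--compactness argument respecting the additivity constraints, passing to a limit at the end. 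Verifying that the interleaving errors vanish uniformly as the mesh shrinks is a secondary, essentially routine matter, controlled by the boundedness of $f$ together with the localizability of $x_{0}$.
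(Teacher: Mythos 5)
Your reduction is correct and is essentially the one in the paper: convexify via Theorem~\ref{compact}, note that every limit is compact and convex, and use the Radstr\"om embedding to identify $I(F)=I(\operatorname{conv}F)$ with the limit set $I(g)$ of a single bounded function $g=\varphi(\operatorname{conv}F)$ taking values in a separable Banach space, with star-shapedness transferring back and forth through $\varphi$. Your target space $C\bigl((B_{X^*},w^*)\bigr)$ is a legitimate (and rather clean) alternative to the paper's $\overline{\operatorname{Lin}}\,\varphi(K(X))\subset l_\infty(B_{X^*})$: you trade the separability of $K(X)$ (Remark~\ref{K(M)}) for metrizability of the dual ball and $w^*$-continuity of support functions of norm-compact sets, both of which are valid under the separability hypothesis on $X$.

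The genuine gap is everything after the reduction. The statement you reduce to --- for every bounded $f:[0,1]\to Y$ with $Y$ separable, $I(f)$ is star-shaped --- is not something to be proved here; it is precisely the theorem of V.~Kadets \cite{Kadets4} (English proof in \cite[p.~131]{Kadets}) that the paper invokes at exactly this point, and your plan for proving it does not close. The interleaving step is indeed routine \emph{once} a localizable center exists, but constructing that center is the entire content of the theorem, and the proposed ``diagonal--compactness argument'' over the dyadic intervals has nothing to ground it: in an infinite-dimensional $Y$ the Riemann sums over a given subinterval form a bounded but in no sense relatively norm-compact family, so there are no convergent subsequences to extract, and one cannot even conclude that $I(f)\neq\varnothing$ without a real argument (non-emptiness genuinely uses separability: e.g.\ $f(t)=e_t$ in $\ell_1([0,1])$ is bounded with $I(f)=\varnothing$, by the same tag-disjointness estimate as in Theorem~\ref{empty}). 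You flag this construction yourself as ``the main obstacle,'' and it is left unresolved, so as written the proposal establishes the theorem only modulo the very result it was supposed to reduce to. The repair is immediate: cite \cite{Kadets4} for the single-valued separable case, after which your argument coincides with the paper's proof.
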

\begin{proof}
    The idea of the proof is to reduce the problem to the following known result by V.Kadets \cite{Kadets4}  (see \cite[page 131]{Kadets} for the proof in English): let $Y$ be a separable Banach space and $f: [0, 1] \rightarrow Y$ be a bounded function. Then $I(f)$ is star-shaped. \\
    In order to start, remark that the conditions of Theorem \ref{compact} are met for $F$. This allows us to consider only convex-valued functions.
    We notice that the image of $f(t) = \varphi(F(t))$ belongs to a convex cone $\varphi(K(X))$. Therefore, all of its convex combinations, most importantly all Riemann sums, also do.
    Moreover, $\varphi(K(X))$ is complete because $K(X)$ is complete and $\varphi|_{K(X)}$ preserves distance. Hence, $I(f) \subset \varphi(K(X))$. \\
    The last observation implies that elements of $I(f)$ can be represented as $\varphi(L)$ for some convex $L$. Under this condition,
    Lemma \ref{embedding lemma}(a) guarantees that $\varphi : I(F) \rightarrow I(f)$ is a bijection.\\
    Next, we introduce a Banach space $Y = \overline{\operatorname{Lin}} { \varphi(K(X))} \subset l_{\infty}(B_{X^{*}})$.
    Since $K(X)$ {is separable}, the distance preserving embedding $\varphi|_{K(X)}$ and closed linear hull preserve separability.
    Hence, $Y$ is a separable Banach space, and $f(t) = \varphi(F(t)) : [0, 1] \rightarrow Y$ is bounded by $M(F)$, thus, by the Kadets' theorem cited above, $I(f)$ is star-shaped. \\
    It is left to conclude that $I(F)$ is also star-shaped because $\varphi$ is a bijection for which (i) and (ii) of Definition \ref{def-Randstr} hold.
\end{proof}

\begin{corollary}
     Let $X$ be a finite-dimensional Banach space, $F : [0, 1] \rightarrow 2^X \setminus \{\varnothing\}$ be a bounded multifunction. Then $I(F)$ is not empty.
\end{corollary}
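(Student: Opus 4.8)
The plan is to reduce the statement to Theorem \ref{star theorem}, whose conclusion (star-shapedness) already forces non-emptiness via the remark following the definition of a star-shaped set. Two gaps separate the corollary's hypotheses from that theorem's: the star theorem requires $X$ separable and $F$ compact-valued, whereas here $X$ is merely finite-dimensional and $F$ merely bounded. The first gap is free, since every finite-dimensional Banach space is separable. The second gap carries the only real content.

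To bridge it, I would introduce the closure multifunction $\overline{F}(t) := \overline{F(t)}$. In a finite-dimensional space a bounded set has compact closure, so each $\overline{F(t)}$ is compact; moreover $M(\overline{F}) = M(F) < \infty$, since passing to the closure does not change the supremum of norms. Thus $\overline{F}$ is a compact-valued bounded multifunction on a separable space, and the star theorem applies to it, yielding that $I(\overline{F})$ is star-shaped and in particular non-empty.

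The heart of the argument is then to show $I(F) = I(\overline{F})$. For any tagged partition the two Riemann sums are finite Minkowski combinations $S(F,\Gamma,T) = \sum_i |\Delta_i|\, F(t_i)$ and $S(\overline{F},\Gamma,T) = \sum_i |\Delta_i|\, \overline{F(t_i)}$, and I claim these have identical closures. The inclusion $\overline{S(F,\Gamma,T)} \subseteq \overline{S(\overline{F},\Gamma,T)}$ is immediate from $F(t_i) \subseteq \overline{F(t_i)}$; for the reverse, any point $\sum_i |\Delta_i|\, a_i$ with $a_i \in \overline{F(t_i)}$ is approximated by $\sum_i |\Delta_i|\, a_i^n$ with $a_i^n \in F(t_i)$, hence lies in $\overline{S(F,\Gamma,T)}$. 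Therefore $d_H(S(F,\Gamma,T), S(\overline{F},\Gamma,T)) = 0$ for every partition, and since a Hausdorff limit is determined only up to closure, the triangle inequality gives $d_H(S(F,\Gamma_k,T_k), A) = d_H(S(\overline{F},\Gamma_k,T_k), A)$ for every closed $A$. Consequently the two multifunctions admit exactly the same closed limits, so $I(F) = I(\overline{F})$.

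I expect the main obstacle to be precisely the verification that the closure commutes with the finite Minkowski sum and positive scaling, i.e.\ $\overline{\sum_i |\Delta_i|\, F(t_i)} = \overline{\sum_i |\Delta_i|\, \overline{F(t_i)}}$; everything else is bookkeeping already licensed by the earlier lemmas. Once this identity is established, combining $I(F) = I(\overline{F})$ with the non-emptiness of $I(\overline{F})$ supplied by Theorem \ref{star theorem} completes the proof.
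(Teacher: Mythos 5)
Your proof is correct and takes the same route the paper intends: the corollary appears with no proof directly after Theorem \ref{star theorem}, so the implicit argument is exactly your reduction (finite-dimensional implies separable; star-shaped implies non-empty, by the remark following the definition). Your write-up is in fact more careful than the paper's, because the corollary's $F$ is only bounded, not compact-valued as Theorem \ref{star theorem} requires, and your closure argument --- in finite dimensions $\overline{F}(t)=\overline{F(t)}$ is compact-valued, every Riemann sum of $F$ has the same closure as (hence Hausdorff distance zero from) the corresponding sum of $\overline{F}$, so $I(F)=I(\overline{F})$ --- supplies precisely the bridging step the paper leaves unstated, and that step is sound.
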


Our next step is to construct an example of a multifunction for which $I(F) = \varnothing$. In addition, the example highlights that some limitation on the class of multifunctions in Theorem \ref{star theorem} is required. 

\begin{definition}
    Banach space $X$ has biorthogonal system bounded by $C$ if it contains $\{x_i\}_{i = 1}^{\infty} \in X$ and $\{f_i\}_{i = 1}^{\infty} \in X^{*}$ satisfying $f_i(x_j) = \delta_i^j$ and $\|x_i \| \|f_i\| \leq C$.
\end{definition}

\begin{remark}
    A trivial example of space that has a biorthogonal bounded by $1$ system is any infinite-dimensional Hilbert space. In fact, every infinite-dimensional Banach space has a bounded biorthogonal system.
    One way to construct such systems is to consider a Schauder basic system $\{ x_i\}_{i = 1}^{\infty}$ and coordinate functionals $\{ f_i\}_{i = 1}^{\infty}$  \cite[1.a.2, 1.a.5, 1.b]{Lin-Tza-I}.
\end{remark}

\begin{theorem} \label{empty}
    Let X be a Banach space with a biorthogonal system $\{x_i\}_{i = 1}^{\infty}$, $\{f_i\}_{i = 1}^{\infty}$ bounded by $C$.
    Then there exists a bounded convex-valued multifunction with $I(F) = \varnothing$.
\end{theorem}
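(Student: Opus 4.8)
\section*{Proof proposal}

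The plan is to use the Radström embedding $\varphi$ to turn the multifunction into a single bounded function $\varphi \circ F$ with values in the non-separable space $\ell_\infty(B_{X^*})$, and to show that \emph{this} function has no Riemann-sum limit, with the separation supplied by biorthogonality. First I would normalize the system so that $\|x_i\| = 1$ and $1 \le \|f_i\| \le C$. The quantitative engine is that distinct ``coordinate'' sets are uniformly separated: writing $A_S = \overline{\operatorname{conv}}(\{0\} \cup \{x_i : i \in S\})$ one has $\varphi(A_S)(x^*) = \max(0, \sup_{i \in S} x^*(x_i))$, so for any $m \in S \triangle S'$ property (iii) of $\varphi$ and the test functional $f_m/\|f_m\|$ give $d_H(A_S, A_{S'}) \ge 1/\|f_m\| \ge 1/C$. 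A second, structural observation dictates the whole design: by Theorem \ref{star theorem} a compact-valued bounded multifunction in a separable space has star-shaped, hence \emph{non-empty}, $I(F)$; thus no compact-valued (in particular no single-valued) example can work in a separable $X$, and the values of $F$ must be genuinely non-compact. This is exactly what pushes $\varphi \circ F$ into the non-separable part of $\ell_\infty(B_{X^*})$, where empty limit sets become possible.

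Concretely I would take $F(t) = A_{S(t)}$ for a ``spike set'' $S(t) \subseteq \mathbb{N}$ depending on the dyadic address of $t$; then $\|F(t)\| = 1$, the multifunction is convex-valued, and $S(t)$ is arranged to be infinite so that every value is non-compact. The point of this shape is that a single coordinate functional reads one scale cleanly: $\varphi(F(t))(f_m) = \mathbb{1}[m \in S(t)] =: b_m(t)$, a bounded function whose support values along a partition are ordinary real Riemann sums $W_m = \sum_j |\Delta_j|\, b_m(\xi_j)$. Each $b_m$ is Riemann integrable, so for fixed $m$ one has $W_m^{(k)} \to \beta_m := \int_0^1 b_m$, and hence any Hausdorff limit $A$ of Riemann sums would be forced to satisfy $\varphi(A)(f_m) = \beta_m$ for all $m$. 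I would choose the $b_m$ to oscillate at scale $2^{-m}$ with each $\beta_m$ bounded away from $0$ and $1$. Non-convergence would then follow from a \emph{migrating-witness} scheme: given a coarse partition and a much finer one, pick a scale $m$ that the coarse partition leaves unresolved but the fine one resolves; the fine partition forces its $W_m$ within $o(1)$ of $\beta_m$, while for the coarse partition I would show that $W_m$ is pushed a fixed distance from $\beta_m$. Dividing this gap by $\|f_m\| \le C$ and using (iii) yields $d_H$ of the two Riemann sums $\ge c > 0$. As the mesh tends to $0$ the responsible scale migrates to infinity, so no sequence is Cauchy, and $I(F) = \varnothing$ follows from completeness together with Lemma \ref{embedding lemma}.

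Everything reduces to one forcing statement, and this is the hard part: for every tagged partition there must exist an unresolved scale $m$ (with $2^{-m}$ below the mesh) such that no placement of the tags can bring $W_m$ within $c$ of $\beta_m$. The obstacle is that a single set of tags must be defeated \emph{simultaneously at all unresolved scales}. A naive choice of $b_m$ will not do: if, say, $b_m(t)$ only reads the $m$-th binary digit of $t$, then for a tag inside a fixed small interval the digits beyond the mesh scale are free, and the adversary can set them independently across $m$ so as to drive every $W_m$ to $\beta_m$ at once, making the Riemann sums converge. The construction must therefore \emph{couple} the scales, so that balancing one scale necessarily unbalances another. I would secure this in one of two ways: either choose the spike sets $S(t)$ from a frustrated family and invoke a discrepancy lower bound stating that no finite weighted tag set can $\beta_m$-equidistribute against infinitely many coupled scale-patterns; or exploit the nonlinearity of the support function by testing against suitable bounded combinations $x^*$, for which $\varphi(F(t))(x^*) = \max(0, \sup_{i \in S(t)} x^*(x_i))$ aggregates all active scales through a maximum that cannot be Riemann-balanced by any tagging. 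Proving this simultaneous-imbalance statement is the crux; the separation estimate, the boundedness and non-compactness of the values, and the passage between $F$ and $\varphi \circ F$ are all routine given the lemmas already established.
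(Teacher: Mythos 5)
Your setup (biorthogonal functionals as coordinate readers of the support function, the separation estimate via property (iii) of $\varphi$, the observation that the values must be non-compact) is sound and in the same spirit as the paper, but the proposal has a genuine gap, and you name it yourself: the ``forcing statement'' --- that for every tagged partition some unresolved scale $m$ has $W_m$ pushed a fixed distance from $\beta_m$, no matter how the tags are placed --- is the entire content of the theorem, and it is left unproven. Worse, the obstruction you identify is real and is not cured by either of your two suggested fixes as stated: once you commit \emph{in advance} to a fixed family of indicator functions $b_m$ with prescribed integrals $\beta_m$, you are asking for a discrepancy-type lower bound holding simultaneously against all finite weighted tag sets, and nothing in the hypotheses (an arbitrary Banach space with a bounded biorthogonal system) supplies such a bound. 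So the proposal is a correct reduction plus an unproven conjecture where the crux should be.

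The paper's resolution is exactly the scale-coupling device you were looking for, but it works by choosing the coordinate \emph{adaptively, after seeing two partitions}, rather than fixing target values $\beta_m$ beforehand. Index the biorthogonal vectors not by ``scales'' but by the countable family $S$ of finite unions of sets from a countable base of $[0,1]$, via a bijection $\pi:\mathbb{N}\to S$, and set $F(t)=\operatorname{conv}\{x_n : t\in\pi(n)\}$. Then, given a putative Cauchy sequence of Riemann sums, compare a coarse partition with tag set $T_n$ to a fine one with $d(\Gamma_m)<\tfrac{1}{2|T_n|}$: the finitely many new tags $T_m\setminus T_n$ can be covered by some $U\in S$ \emph{disjoint from} $T_n$ (separating two finite sets of points by finitely many basic open sets is trivial topology --- this replaces your hard equidistribution statement). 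The single functional $f_{\pi^{-1}(U)}$ then reads $0$ on the coarse Riemann sum, since no coarse tag lies in $U$, while on the fine Riemann sum its supremum is at least $1-|T_n|\,d(\Gamma_m)>\tfrac12$, since all but the $|T_n|$ intervals containing old tags contribute their full length. This gives $d_H$ of the two sums $>\tfrac12$, so no sequence of Riemann sums with mesh tending to $0$ is even Cauchy, and $I(F)=\varnothing$. The lesson relative to your plan: the ``scales'' must not be tied to a fixed geometric structure of $[0,1]$ (dyadic digits), but to the combinatorics of finite tag sets themselves, so that the witness functional can be chosen after the adversary has placed the tags; then no lower-bound machinery is needed at all.
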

\begin{proof}
    Without loss of generality $\|x_i\|= C + 1$, $f_i \in B_{X^{*}}$, since we can consider a system $\{\lambda_i x_i\}_{i = 1}^{\infty}$, $\{\frac{f_i}{\lambda_i}\}_{i = 1}^{\infty}$.\\
    Let $S$ be a set of all finite unions of a countable base of the standard topology of $[0, 1]$. \\
    Since $S$ is countable, there exists a bijection $\pi : \mathbb{N} \rightarrow S$. We construct $F$ in the following way:
    \[F(t) = \operatorname{conv} \{x_n | t \in \pi(n), n \in \mathbb{N} \} \]
    This function is convex-valued and bounded by $C+1$. We claim that there does not exits any sequence of tagged partitions with
    $d(\Gamma_k) \xrightarrow[k \rightarrow \infty]{} 0$, such that ${S(F, \Gamma_k, T_k)}_{k = 1}^{\infty}$ is fundamental. \\
    We suppose that such sequence of tagged partitions exists and denote $S_k(*)= S(*, \Gamma_k, T_k)$.
    Consider an arbitrary $n$ and choose $m \geq n$ such that $d(\Gamma_m) < \frac{1}{2 |T_n|}$.\\
    Let $A_{m}^{n} = T_m \setminus T_n \subset [0, 1]$.
    For each point from $A_{m}^{n}$ there exists a neighborhood from the countable base which separates it from $T_n$. Let $U_{m}^{n}$ be their finite union. \\
    Consider $f = f_{\pi^{-1}(U_{m}^{n})} = f_{i}$ and recall biorthogonality property: \\
    for all $t \in T_n$ and $x \in F(t)$ we have $f(x) = 0$, since $U_{m}^{n} \cap T_n = \varnothing$; \\
    for all $t \in A_{m}^{m}$ and $x \in F(t)$ we have $f(x) \in [0, 1]$ and is equal to 1 for $x = x_{i} \in F(t)$.

    Now we can make an estimation for the distance between the sums utilizing $f$:
    \[d_H(S_n(F), S_m(F)) = \| \varphi(S_n(F)) - \varphi(S_m(F) \| = \sup_{g \in B_{x^{*}}} |\sup g(S_n(F)) - \sup g(S_m(F)) | \geq \]
    \[ \geq |\sup f(S_n(F)) - \sup f(S_m(F))| = |0 - \sup f(S_m(F))| = \]
    \[= \sum_{j \; : \; t_j \in A_{m}^{n}} |\Delta_j| \times 1 \geq \sum_{T_m} |\Delta_j| - \sum_{T_m \cap T_n} |\Delta_j| \geq 1 - |T_n| d(\Gamma_m) > \frac{1}{2}\]
    Contradiction with fundamentality.
\end{proof}

\section{Suggestions for further investigation}

\noindent 1. Does convex class coincides with the multiconvex class ? \\
2. How to describe the multiconvex class ? \\
3. How to describe spaces for which $I(F) \subset I(\operatorname{conv} F)$ ? \\

\section*{Acknowledgement}
The author is grateful to his scientific supervisor professor Vladimir Kadets for guidance and insights.

\end{document}